\documentclass[12pt]{article}
\usepackage{amsfonts,amsmath,amsthm}
\usepackage{boldline}


\usepackage{xparse}
\usepackage{environ}
\let\amsthmproof\proof
\let\amsthmendproof\endproof

\newif\ifignoreallproof

\ifignoreallproof
  \RenewDocumentEnvironment{proof}{s}
   {\ignoreproof}
   {\endignoreproof}
\else
  \RenewDocumentEnvironment{proof}{s}
   {\IfBooleanTF{#1}
     {\ignoreproof}
     {\amsthmproof}}
   {\IfBooleanTF{#1}
     {\endignoreproof}
     {\amsthmendproof}}
\fi

\NewEnviron{ignoreproof}{}

\usepackage{tikz}

\makeatletter
\renewcommand*\env@matrix[1][*\c@MaxMatrixCols c]{%
  \hskip -\arraycolsep
  \let\@ifnextchar\new@ifnextchar
  \array{#1}}
\makeatother

\usepackage{amssymb}
\usepackage{enumerate}
\usepackage[latin1]{inputenc} 
\usepackage{color}
\usepackage{pifont}
\usepackage{multirow}
\usepackage[
paperwidth=210mm,paperheight=290mm,textwidth=180mm, textheight=265mm,lmargin=15mm,top=10mm,
]{geometry}
\usepackage{algorithm}
\usepackage{algorithmic}
\usepackage{mathdots}

\DeclareMathOperator \rank {rank}

\DeclareMathOperator \maxRank{maxRank}
\DeclareMathOperator \minRank{minRank}
\DeclareMathOperator \rows{rows}
\DeclareMathOperator \cols{cols}

\usepackage{arydshln,leftidx,mathtools}

\newtheorem{theorem}{Theorem}[section]
\newtheorem{proposition}[theorem]{Proposition}
\newtheorem{notation}[theorem]{Notation}

\newtheorem{remark}[theorem]{Remark}
\newtheorem{definition}[theorem]{Definition}

\newtheorem{example}[theorem]{Example}
\newtheorem{lemma}[theorem]{Lemma}

\newenvironment{smat}{\left[\begin{smallmatrix}}{\end{smallmatrix}\right]}


\title{The WST-decomposition for partial matrices  
  \footnote{{\bf Keywords:} Partial matrix; ACI-matrix; Completion problem; Rank; Matrix decomposition.}
  \footnote{{\bf Mathematics subject classification:} 15A83}
  \footnote{Supported by the Spanish Ministerio y Tecnología MTM2017-90682-REDT.}}
\author{Alberto Borobia, Roberto Canogar\\
\small Dpto. Matem\'{a}ticas, Universidad Nacional de Educaci\'on a Distancia (UNED), 28040 Madrid, Spain\\
\small e-mail: $aborobia@mat.uned.es$, $rcanogar@mat.uned.es$ }
\date{}


\begin{document}

\maketitle

\begin{abstract}

A partial matrix over a field $\mathbb{F}$ is a matrix whose entries are either an element of  $\mathbb{F}$ or an indeterminate and with each indeterminate only appearing  once. A completion  is an assignment  of values in $\mathbb{F}$ to all indeterminates. Given a partial matrix,  through elementary row operations and column permutation it can be decomposed into a block matrix of the form $\begin{smat}{\bf W} & * & *  \\ 0 & {\bf S} & * \\ 0 & 0 &  {\bf T} \end{smat}$ where ${\bf W}$ is wide (has more columns than rows), ${\bf S}$ is square,   ${\bf T}$ is  tall (has more rows than columns), and  these three blocks have at least one completion with full rank. And importantly, each one of the blocks ${\bf W}$, ${\bf S}$ and ${\bf T}$ is unique up to elementary row operations and column permutation whenever ${\bf S}$ is required to be as large as possible. When this is the case $\begin{smat}{\bf W} & * & *  \\ 0 & {\bf S} & * \\ 0 & 0 &  {\bf T} \end{smat}$ will be called a WST-decomposition. With this decomposition it is trivial to compute maximum rank of a completion of the original partial matrix: $\rows({\bf W})+\rows({\bf S})+\cols({\bf T})$. In fact we introduce the WST-decomposition for a broader class of matrices: the ACI-matrices. 
\end{abstract}

\section{Introduction}

\subsection{Preliminaries}

 The ACI-matrices were introduced in 2010 by Brualdi, Huang and Zhan~\cite{MR2680270} as a generalization of {\bf partial matrices} (matrices whose entries are either a constant or an indeterminate and with each indeterminate only appearing  once). 
 Let $\mathbb{F}[x_1, \ldots , x_k]$ denote  the set of polynomials in the   indeterminates $x_1,  \ldots , x_k$ with coefficients on a field $\mathbb{F}$. A matrix  over $\mathbb{F}[x_1, \ldots, x_k]$ is  an \textbf{A}ffine \textbf{C}olumn \textbf{I}ndependent matrix or {\bf ACI-matrix} if its entries are polynomials of degree at most one and  no indeterminate appears in two different columns.
A \textbf{completion} of an ACI-matrix $A$ is an assignment  of values in $\mathbb{F}$ to all indeterminates  so that it gives a constant matrix in $\mathbb{F}$.  All definitions and most of the results work for any field $\mathbb{F}$, so we will usually omit in what field we are working on.

\begin{definition} The {\bf Rank} of an ACI-matrix $M$ is the set  of all possible ranks of  completions of $M$. The {\bf maxRank} of $M$ is the maximum  rank  for a completion of $M$. The {\bf minRank} of $M$  is the minimum  rank  for a completion of $M$.  We say that   $M$ is {\bf constantRank} if $\maxRank(M)=\minRank(M)$.  
\end{definition} 
 
The Rank of partial matrices has a substantial literature (see Section 1 of \cite{MR2680270}). The constantRank partial matrices were studied in~\cite{McTigueQuinlan3}, and the constantRank ACI-matrices were studied  in~\cite{BoCa1,BoCa2,BoCa3,MR2680270,MR2775784}.

Multiplying an ACI-matrix by a constant square matrix on the left produces  an ACI-matrix of the same size. If, in addition, the constant matrix is nonsingular then the new ACI-matrix will share the same Rank, minRank and maxRank with the old one. The same happens if we permute the columns of an ACI-matrix. Since we are concerned with the Rank of ACI-matrices, the following definition makes sense.
\begin{definition} 
Let $M$ be an $m\times n$ ACI-matrix. For any  nonsingular constant matrix $R$ of order $m$  and for any permutation matrix $Q$ of order $n$, the ACI-matrix $RMQ$ is said to be {\bf equivalent} to  $M$. We represent this equivalence by $M \sim RMQ$.
\end{definition}

In this work we are interested in the study of the Rank of a given ACI-matrix $M$. In order to do it we will  consider the equivalence class of $M$ so that we can find a representative in the class with an easier structure that, for example, reveal directly its maxRank. This easier structure will be the WST-decomposition of $M$ as we will see in Section~\ref{WST}.  
It is important to point out that this definition of equivalence can not be applied  to partial matrices since  $RMQ$ will not be necessarily a partial matrix, it will be an ACI-matrix. 


\subsection{Basic definitions}

The relation between the number of rows and the number of columns of  ACI-matrices will play an important role, that is why we introduce the following terminology:
\begin{definition} \label{}
Let $M$ be an  ACI-matrix.
\begin{itemize}
\item {\bf rows($M$)} denotes  the number of rows of $M$.
\item  {\bf cols($M$)}  denotes  the number of columns of $M$.
\item $M$ is \textbf{wide} if $\cols(M)> \rows(M)$.
\item $M$ is \textbf{tall} if $\rows(M)> \cols(M)$.
\item $M$ is \textbf{square} if $\rows(M)= \cols(M)$.
\end{itemize}
For technical reasons we will  consider as ACI-matrices the  ones without rows or/and without columns, namely: (i) the {\bf wide degenerate} ACI-matrix $0\times q$  with $q>0$; (ii) the {\bf tall degenerate} ACI-matrix $p\times 0$  with $p>0$;  and (iii)  the \textbf{square degenerate}  or \textbf{void} ACI-matrix $0\times 0$. 

\end{definition}

A constant matrix $M$ is full row rank if $\rank(M)=\rows(M)$, is full column rank if $\rank(M)=\cols(M)$, and is full rank if $\rank(M)=\min\{\rows(M),\cols(M)\}$. We will adapt this common terminology to the maxRank of ACI-matrices.

 \begin{definition} \label{mfr}
The  ACI-matrix $M$ is 
\begin{itemize}
\item  {\bf Full Row maxRank} or {\bf FRmR} if $\maxRank(M)=\rows(M)$.
\item  {\bf Full Column maxRank}  or {\bf FCmR} if $\maxRank(M)=\cols(M)$.
\item  {\bf Full maxRank} or {\bf FmR} if  $\maxRank(M)=\min \{\rows(M), \cols(M)\}$ or, equivalently, if $M$ has a  completion with full rank.
\end{itemize}
Just to emphasize: $(i)$ FRmR is wide or square FmR; $(ii)$ FCmR is tall or square FmR; $(iii)$ FmR is FRmR or FCmR or both.
Again, for technical reasons we will consider a  tall degenerate   to be FRmR, a  wide degenerate  to be FCmR,  and the void   to be FRmR and FCmR.
\end{definition}

The next proposition shows how to build new FmR  ACI-matrices from known FmR ACI-matrices. Its proof is straightforward.

\begin{proposition} \label{FRmR-FCmR}
Let  $M$ be an ACI-matrix. 
\begin{enumerate}
\item If $M$ is FRmR and $M \sim M'$ then  $M'$ is FRmR.
\item If $M$ is FCmR and $M \sim M'$ then  $M'$ is FCmR.
\item \label{dmp7} If $M=\begin{smat} A & B \\  0 & C \end{smat}$   where $A$ and $C$ are FRmR  then $M$ is FRmR.
\item If $M=\begin{smat} A & B \\  0 & C \end{smat}$   where $A$ and $C$ are FCmR  then $M$ is FCmR.
\item If $M=\begin{smat} A & B \\  0 & C \end{smat}$   where $A$ and $C$ are square FmR  then $M$ is square FmR.
\end{enumerate}
\end{proposition}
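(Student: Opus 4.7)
The plan is to handle parts (1)--(2) by a direct check that equivalence preserves the relevant data, and then to handle (3)--(5) uniformly through the standard block--triangular rank inequality applied to a completion.

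For (1) and (2): multiplication by a nonsingular $R$ on the left and by a permutation matrix $Q$ on the right carries any completion $M'$ of $M$ to the completion $RM'Q$ of $M'=RMQ$, and since $R$ is nonsingular and $Q$ is a permutation we have $\rank(RM'Q)=\rank(M')$. So $M$ and $RMQ$ have exactly the same set of completion ranks, hence the same $\maxRank$. Because $R$ is $m\times m$ and $Q$ is $n\times n$, the numbers $\rows$ and $\cols$ are also preserved, and the FRmR/FCmR conditions are exactly statements about $\maxRank$ against $\rows$ or $\cols$. So both properties transfer from $M$ to $M'$.

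For (3)--(5): I would fix completions $A'$ of $A$ and $C'$ of $C$ that realize their respective maxRanks, and an arbitrary completion $B'$ of $B$, so that $M'=\begin{smat} A' & B' \\ 0 & C' \end{smat}$ is a completion of $M$. The key ingredient is the classical lower bound
\[
\rank\begin{smat} A' & B' \\ 0 & C' \end{smat}\;\geq\;\rank(A')+\rank(C'),
\]
which follows by noting that the rows of $(0\ C')$ contribute $\rank(C')$ independent rows whose projections onto the first $\cols(A)$ columns vanish, while the rows of $(A'\ B')$ contribute an additional $\rank(A')$ independent rows whose projections onto those columns already form a basis of the row space of $A'$. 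Combining this with the trivial upper bound $\maxRank(M)\leq\min\{\rows(M),\cols(M)\}$ gives each of the three conclusions at once: in case (3), $\rank(A')+\rank(C')=\rows(A)+\rows(C)=\rows(M)$; in case (4), it equals $\cols(A)+\cols(C)=\cols(M)$; and in case (5), $A$ and $C$ square FmR forces $M$ to be square and the sum equals $\rows(M)=\cols(M)$.

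The only mildly delicate part, and thus the main obstacle, is bookkeeping for the degenerate sizes introduced just before the proposition. I would deal with them case-by-case: if one of $A$, $C$ is void then $M$ equals the other and the conclusion is immediate; if $A$ is a wide degenerate in (4) or $C$ is a tall degenerate in (3) the corresponding block disappears and $M$ reduces to the surviving block; the tall-degenerate convention for FRmR and the wide-degenerate convention for FCmR are exactly what is needed to make the block-triangular lower bound still yield the correct dimension count in the remaining corner cases. Once these conventions are invoked, no further calculation is required.
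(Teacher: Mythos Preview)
Your argument is correct. The paper itself does not supply a proof of this proposition beyond the remark ``Its proof is straightforward,'' so there is no detailed approach to compare against; your write-up simply fills in the omitted details in the expected way, via preservation of rank under equivalence for (1)--(2) and the block-triangular lower bound $\rank\begin{smat} A' & B' \\ 0 & C' \end{smat}\ge\rank(A')+\rank(C')$ for (3)--(5).

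Two small polish points. First, in your treatment of (1)--(2) you overload the symbol $M'$, using it both for the equivalent ACI-matrix $RMQ$ and for a generic completion of $M$; pick a different letter (say $\widehat{M}$) for the completion. Second, when you say ``an arbitrary completion $B'$ of $B$,'' note that $B$ and $C$ occupy the same columns and may share indeterminates, so $B'$ is not entirely free once $C'$ is fixed; however this does no harm, since the inequality holds for every $B'$ and since $A$ and $C$ share no columns (hence no indeterminates), so their full-rank completions can indeed be chosen independently.
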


\subsection{The main Theorem}
The first important result in ACI-matrices  appeared in the work where they were introduced.
\begin{theorem}(see~\cite[Theorem 3]{MR2680270}) \label{SubmatrizCerosFirst}
Let $M$ be an $m \times n$  ACI-matrix 
and let $\rho$ be an integer such that $0\leq \rho < \min \{m, n\}$.  The following two statements are equivalent:
\begin{enumerate}[(i)]
\item $\maxRank(M)\leq \rho$.
\item For some positive integers $r$ and $s$ with $\rho=(m-r)+(n-s)$ there exist a nonsingular constant matrix $R$ and a permutation matrix $Q$ such that  $RMQ= \begin{smat}A & B \\  0 & C   \end{smat}$ where $0$ is an  $r\times s$  submatrix with all its entries equal to zero.
\end{enumerate}
\end{theorem}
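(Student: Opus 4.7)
The direction (ii)$\Rightarrow$(i) is a column-dimension argument. In $RMQ = \begin{smat}A & B \\ 0 & C\end{smat}$, each of the first $s$ columns has its last $r$ entries identically zero and thus, in any completion, lies in a fixed $(m-r)$-dimensional subspace of $\mathbb{F}^m$, while the remaining $n-s$ columns are unrestricted; hence the column space of every completion has dimension at most $(m-r)+(n-s)=\rho$, giving $\maxRank(RMQ)\le\rho$, which equals $\maxRank(M)$ by equivalence.

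For (i)$\Rightarrow$(ii), my plan is to induct on $m+n$. The base $\rho=0$ forces every entry of $M$ to be the identically-zero polynomial (else some completion makes a specific entry nonzero, producing rank $\ge 1$), so $M$ is the zero matrix and $r=m$, $s=n$ works. If $M$ has an identically-zero row or column, permute it to the appropriate boundary, strip it off to obtain an ACI-matrix $M'$ of smaller size with the same maxRank, invoke the inductive hypothesis on $M'$, and re-attach the deleted zero line to extend the resulting zero block by one unit; a direct count keeps $(m-r)+(n-s)=\rho$. Similarly, if $\maxRank(M)<\rho$, apply the inductive hypothesis at a smaller value of $\rho$ and shrink the block by discarding one row or column—possible because $\rho<\min(m,n)$ forces the smaller-block to have $r'+s'\ge 2$.

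The core case is $\maxRank(M)=\rho$ with no identically-zero row or column. Fix a completion $M_0$ with $\rank(M_0)=\rho$, choose a $\rho\times\rho$ nonsingular submatrix $N_0$ of $M_0$, and, via a row permutation (absorbed into $R$) and a column permutation ($Q$), place $N_0$ in the top-left corner, so that the correspondingly-permuted $M$ has the block form $\begin{smat}N & P \\ S & T\end{smat}$. Since the last $m-\rho$ rows of $M_0$ are constant combinations of the first $\rho$, we have $[S_0|T_0]=X[N_0|P_0]$ for some $X\in\mathbb{F}^{(m-\rho)\times\rho}$, and applying $R_0=\begin{smat}I & 0 \\ -X & I\end{smat}$ yields $\begin{smat}N & P \\ \tilde S & \tilde T\end{smat}$ with $\tilde S = S-XN$ and $\tilde T = T-XP$, both vanishing at $M_0$. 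From $\maxRank\le\rho$, every $(\rho+1)\times(\rho+1)$ minor is an identically-zero polynomial; the minor formed by all rows and columns of $N$ together with row $\rho+i$ and column $\rho+j$ evaluates by Schur complement to $\det(N)\bigl(\tilde T_{ij}-\tilde S_{i,:}N^{-1}P_{:,j}\bigr)$, yielding the rational identity $\tilde T_{ij}=\tilde S_{i,:}N^{-1}P_{:,j}$ in $\mathbb{F}(\mathbf{x})$. Here the ACI condition is decisive: $\tilde T_{ij}$ depends only on the variables of column $\rho+j$, so substituting the $M_0$-values for the variables of columns $1,\ldots,\rho$ leaves the left side untouched but reduces $\tilde S$ to zero on the right, forcing $\tilde T_{ij}\equiv 0$.

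Once $\tilde T\equiv 0$, the lower-right $(m-\rho)\times(n-\rho)$ block is identically zero, but this alone only gives $(m-r)+(n-s)=2\rho$, which is too weak. The remaining task is to show that either $\tilde S$ or $P$ reduces further: clearing denominators in the Schur identity gives the polynomial matrix identity $\tilde S\cdot\operatorname{adj}(N)\cdot P = 0$, and the column-wise variable disjointness of the ACI form forces either a full row of $\tilde S$ or a full column of $P$ to be identically zero. For $\rho=1$ this is immediate, since $\operatorname{adj}(N)=1$ and a product of two nonzero polynomials in disjoint variable sets cannot vanish; for $\rho\ge 2$ a variable-tracking argument through the cofactor expansion of $\operatorname{adj}(N)$ is required. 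Such a reduction produces a zero row or column in the transformed matrix, which returns the problem to the previously handled case and closes the induction. I expect this last step—the variable-tracking through the cofactors for general $\rho$—to be the main technical obstacle.
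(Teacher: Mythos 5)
First, a point of reference: the paper does not prove this theorem at all --- it is imported verbatim from Brualdi, Huang and Zhan \cite[Theorem 3]{MR2680270} --- so your proposal can only be judged on its own merits. Your direction $(ii)\Rightarrow(i)$ is correct. In $(i)\Rightarrow(ii)$, the reduction to the case $\maxRank(M)=\rho$ with no identically zero line, and the Schur-complement computation showing $\tilde T\equiv 0$, are sound (over a finite field you should add the remark that each $(\rho+1)$-minor has degree at most one in every indeterminate, so vanishing at every point of $\mathbb{F}^k$ really does force the zero polynomial). This correctly produces a zero block with $r+s=m+n-2\rho$.

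The gap is in the closing step, and it is not merely the ``technical obstacle'' you anticipate: the dichotomy you assert --- that $\tilde S\cdot\operatorname{adj}(N)\cdot P=0$ together with the column-disjointness of the variables forces a full row of $\tilde S$ or a full column of $P$ to vanish identically --- is false for $\rho\ge 2$. Consider
\[
M=\begin{bmatrix}1&0&x\\0&1&0\\0&z&0\end{bmatrix},
\]
a partial matrix with $m=n=3$, $\det M\equiv 0$ and $\maxRank(M)=\rho=2$, having no identically zero row or column. Taking $M_0$ at $x=z=0$ gives $N=I_2$, $X=0$, $\tilde S=\begin{smat}0&z\end{smat}$, $P=\begin{smat}x\\0\end{smat}$, $\tilde T=\begin{smat}0\end{smat}$, and indeed $\tilde S\operatorname{adj}(N)P=0\cdot x+z\cdot 0=0$; yet the unique row of $\tilde S$ and the unique column of $P$ are both nonzero. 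The required zero block with $r+s=m+n-\rho=4$ does exist (rows $\{2,3\}$, columns $\{1,3\}$), but it cuts through all four blocks of your partition --- including $N$ itself --- and is not obtained by adjoining whole rows of $\begin{smat}\tilde S&0\end{smat}$ or whole columns of $\begin{smat}P\\0\end{smat}$ to $\tilde T$. So the last step of your induction needs a genuinely different mechanism (further constant row operations together with a re-selection of the rows and columns carrying the zero block), which is precisely where the substance of \cite[Theorem~3]{MR2680270} lies. A smaller issue: when you strip an identically zero row or column and recurse, the smaller matrix may satisfy $\rho=\min\{m',n'\}$, where the statement being proved no longer applies, so that boundary case of the induction needs separate (easy) handling.
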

It is important to note that the values of $r$ and $s$ in Theorem~\ref{SubmatrizCerosFirst} are not unique in general, and neither are $A$ and $C$ (see the example below). The inspiration of the present work has been to generalize Theorem~\ref{SubmatrizCerosFirst} to find an \emph{analogous}  decomposition which is unique \emph{in some sense}. In our main theorem, Theorem~\ref{existance&uniqueness}, we will show that any ACI-matrix is equivalent to an ACI-matrix $$\left[\begin{array}{ccc}
{\bf W} & * & *  \\
0 & {\bf S} & * \\ 
0    & 0 &  {\bf T} \\
\end{array}\right] $$
where ${\bf W}$ is  wide FRmR or void, ${\bf S}$ is square FmR or void and ${\bf T}$ is  tall FCmR or void. And importantly, each one of the ACI-matrices ${\bf W}$, ${\bf S}$ and ${\bf T}$ are unique up to equivalence whenever ${\bf S}$ is required to be as large as possible. When this is the case the ACI-matrix $\begin{smat}{\bf W} & * & *  \\ 0 & {\bf S} & * \\ 0 & 0 &  {\bf T} \end{smat}$ is called a {\bf WST-decomposition}. This decomposition even works for FmR matrices (note that Theorem~\ref{SubmatrizCerosFirst} did not), but then at least one of the blocks ${\bf W}$ or ${\bf T}$ become void or degenerate ACI-submatrices.

The WST-decomposition will allow us to restrict the study of some properties of ACI-matrices (and for that matter partial matrices) to the case of FmR ACI-matrices. For instance, in this work we will be focused  on the maxRank  and if we know a WST-decomposition of an ACI-matrix it will be trivial to compute its maxRank: $\rows({\bf W})+\rows({\bf S})+\cols({\bf T})$. So we might ask how to find the WST-decomposition in practice. A work that explains an algorithm that computes efficiently the WST-decomposition is in preparation. The maxRank for partial matrices was treated in~\cite{CJRW} where the authors provide an procedure to compute it.  Our  algorithm will permit us to compute the maxRank for the broader class of ACI-matrices.

\bigskip

{\bf Example:} Below we present a $5\times 5$ ACI-matrix (it is actually a partial matrix) with maxRank 4, and with three different block partitions that verify the condition (ii) of Theorem~\ref{SubmatrizCerosFirst}:
{\small\begin{align} \label{example4x4}
M=\left[\begin{array}{cc|ccc}
1 & x_1 & y_1 & z_1 & 1\\ \hline
0 & 0 & y_2  & z_2 & t_1 \\
0 & 0 & 0 & z_3 & t_2\\
0 & 0 & 0 & 0 & t_3\\
0 & 0 & 0 & 0 & 1
\end{array}\right]
=\left[\begin{array}{ccc|cc}
1 & x_1 & y_1 & z_1 & 1\\
0 & 0 & y_2  & z_2 & t_1 \\ \hline
0 & 0 & 0 & z_3 & t_2\\
0 & 0 & 0 & 0 & t_3\\
0 & 0 & 0 & 0 & 1
\end{array}\right]
=
\left[\begin{array}{cccc|c}
1 & x_1 & y_1 & z_1 & 1\\ 
0 & 0 & y_2  & z_2 & t_1 \\
0 & 0 & 0 & z_3 & t_2\\ \hline
0 & 0 & 0 & 0 & t_3\\ 
0 & 0 & 0 & 0 & 1
\end{array}\right].
\end{align}}

For $M$ a valid WST-decomposition is:
\begin{align*}
{\small\left[\begin{array}{cc|cc|c}
1 & x_1 & * & * & *\\ \hline
0 & 0 & y_2  & z_2 & * \\
0 & 0 & 0 & z_3 & *\\ \hline
0 & 0 & 0 & 0 & t_3\\ 
0 & 0 & 0 & 0 & 1
\end{array}\right]} \text{ where } {\bf W}= \begin{bmatrix}1 & x_1\end{bmatrix},\ {\bf S}= \begin{bmatrix} y_2 & z_2 \\ 0 & z_3\end{bmatrix},\ {\bf T}=\begin{bmatrix}t_3 \\ 1\end{bmatrix}.
\end{align*}

The property of ${\bf S}$ being as big as possible is required for the uniqueness of ${\bf W}$, ${\bf S}$ and ${\bf T}$ up to equivalence. Because decompositions like the following meet all the other requirements
{\small\begin{align*}
\left[\begin{array}{cc|c|cc}
1 & x_1 & * & * & *\\ \hline
0 & 0 & y_2  & * & * \\ \hline
0 & 0 & 0 & z_3 & t_2\\ 
0 & 0 & 0 & 0 & t_3\\ 
0 & 0 & 0 & 0 & 1
\end{array}\right].
\end{align*}}


\section{Zero blocks}

Given an ACI-matrix, we will be interested in finding  equivalent ACI-matrices which have a lot of zeros. A submatrix with all its entries equal to  0 will be referred as a {\bf zero block}. A measure associated to the size of a zero block that we will frequently use is  its number of rows plus its number of columns.

\begin{definition} \label{defBigZero}
Let  $\begin{smat} A & B \\  0 & C \end{smat}$ be an $m\times n$ ACI-matrix where the zero block $0$ is of size $r\times s$.
\begin{itemize}
\item The zero block in $\begin{smat} A & B \\  0 & C \end{smat}$ is  \textbf{Big}  when  $r+s>\max\{ m, n\}$.
\item The zero block in $\begin{smat} A & B \\  0 & C \end{smat}$ is  \textbf{Medium} when  $r+s=\max\{ m, n\}$.
\end{itemize}
\end{definition}
Note that  a Medium zero block measures one less than the smallest Big zero block. Again, for technical reasons we include the possibility for a Medium zero block to be  degenerate. In our next result we provide  equivalent and more intuitive definitions for Big and for Medium zero blocks. 
\begin{proposition} \label{corBig}
For  an  ACI-matrix $M=\begin{smat} A & B \\  0 & C \end{smat}$ with a zero block  the following properties are satisfied: 
\begin{enumerate}[(i)]
\item The zero block is  Big  if and only if $A$ is wide and $C$ is tall. 
\item The zero block  is Medium if and only if either (1) $M$ is tall, $A$ is square and $C$ is tall; (2) $M$ is square, $A$ and $C$ are square; or (3) $M$ is wide, $A$ is wide and $C$  is square.
\end{enumerate}
\end{proposition}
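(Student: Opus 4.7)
The plan is to translate every geometric condition (``wide'', ``tall'', ``square'', ``Big'', ``Medium'') into an arithmetic inequality or equality in the four integers $m,n,r,s$, and then observe that the equivalences in (i) and (ii) are just rearrangements of these (in)equalities. Throughout, I set $M$ to be $m\times n$ with the zero block of size $r\times s$ placed in the lower-left, so that $A$ has size $(m-r)\times s$ and $C$ has size $r\times (n-s)$.

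For part (i), the translation is immediate: $A$ is wide iff $s>m-r$, i.e.\ $r+s>m$; and $C$ is tall iff $r>n-s$, i.e.\ $r+s>n$. The conjunction of these two strict inequalities is exactly $r+s>\max\{m,n\}$, which is the definition of a Big zero block. This settles (i) with essentially no work.

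For part (ii), the same dictionary gives: $A$ square iff $r+s=m$, $C$ square iff $r+s=n$, $A$ wide iff $r+s>m$, $C$ tall iff $r+s>n$, and analogously for $M$. Assuming the zero block is Medium, i.e.\ $r+s=\max\{m,n\}$, I split into the three mutually exclusive cases according to whether $M$ is tall ($m>n$), square ($m=n$) or wide ($n>m$); in each case the equation $r+s=\max\{m,n\}$ immediately pins down the shape of $A$, and then the shape of $C$ is forced by reading off $n-s=r-(m-n)$ (or $m-r=s-(n-m)$). This produces precisely the three sub-cases (1), (2), (3) listed in the statement. Conversely, in each of the sub-cases one of the equalities $r+s=m$ or $r+s=n$ holds, and the shape condition on the other block guarantees that this common value is in fact $\max\{m,n\}$, recovering ``Medium''.

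There is no real obstacle here: the whole proposition is a bookkeeping exercise on the identities relating the sizes of $A$, $C$ and the zero block to those of $M$. The only mild care needed is in the converse direction of (ii), where one has to check that the shape hypotheses on $A$, $C$ and $M$ are consistent and force the correct value of $\max\{m,n\}$; this amounts to verifying that, for example, ``$A$ wide and $C$ square'' together with the block layout already implies $n>m$, so that the Medium equation $r+s=n$ is indeed $r+s=\max\{m,n\}$.
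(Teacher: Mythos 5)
Your proposal is correct and follows essentially the same route as the paper: translate wide/tall/square into the inequalities $r+s>m$, $r+s>n$, etc., for the blocks $A$ of size $(m-r)\times s$ and $C$ of size $r\times(n-s)$, and then split part (ii) into the three cases according to the shape of $M$. The paper presents exactly this dictionary as a chain of equivalences, so there is nothing to add.
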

\begin{proof}  Let $M=\begin{smat} A & B \\  0 & C \end{smat}$ be an  $m\times n$ ACI-matrix with a $r\times s$  zero block, that is, 
\begin{align}\label{MDecomp}
M =\left[\begin{array}{cc}
A & B  \\ 
\smash{\underset{s}{\underbrace{0}}}     & \smash{\underset{n-s}{\underbrace{C}}}
\end{array}\right] \hspace{-2mm}\begin{array}{ll}
\} \ m-r \\
\} \ r
\end{array} 
\end{align}

\begin{enumerate}[(i)]
\item The zero block of  $M$ is  Big if and only if 
\begin{align*}
r+s>\max\{ m, n  \}  \Leftrightarrow 
\begin{cases} 
s>m -r \\
r>n -s 
\end{cases}
\Leftrightarrow 
\begin{cases} 
\cols(A)>\rows(A)\\
 \rows(C)>\cols(C)
\end{cases}
\Leftrightarrow 
\begin{cases} 
A \text{ is  wide}\\
C \text{ is  tall}
\end{cases}
\end{align*}


\item The zero block of  $M$ is  Medium if and only if 
\begin{align*}
r+s=\max\{ m, n  \}  \Leftrightarrow 
\begin{cases} 
\text{if } M \text{ tall }\;\;\;\;  \begin{cases} 
s=m -r \\
r> n -s 
\end{cases}
\Leftrightarrow 
\begin{cases} 
\cols(A)=\rows(A)\\
 \rows(C)>\cols(C)
\end{cases}
\Leftrightarrow 
\begin{cases} 
A \text{ square}\\
C \text{ tall}
\end{cases} \vspace{3mm}  \\ %
\text{if } M \text{ square}  \begin{cases} 
s=m -r \\
r= n -s 
\end{cases}
\Leftrightarrow 
\begin{cases} 
\cols(A)=\rows(A)\\
 \rows(C)=\cols(C)
\end{cases}
\Leftrightarrow 
\begin{cases} 
A \text{ square}\\
C \text{ square}
\end{cases} \vspace{3mm}  \\ %
\text{if } M \text{ wide }\;\; \begin{cases} 
s> m -r \\
r=n -s 
\end{cases}
\Leftrightarrow 
\begin{cases} 
\cols(A)> \rows(A)\\
 \rows(C)=\cols(C)
\end{cases}
\Leftrightarrow 
\begin{cases} 
A \text{ wide}\\
C \text{ square}
\end{cases}
\end{cases}.
\end{align*}
\end{enumerate}
\end{proof}

Let us see a consequence when an ACI-matrix has a Big zero block.
\begin{proposition} \label{Big->nofFullMaxRank}
An  ACI-matrix  with a  Big  zero block is not FmR. 
\end{proposition}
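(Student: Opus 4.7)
The plan is to translate the ``Big'' hypothesis, via Proposition~\ref{corBig}(i), into explicit size inequalities, and then bound the rank of an arbitrary completion using the block-triangular structure.

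First I would unpack the setup. Write
\[
M = \left[\begin{array}{cc} A & B \\ 0 & C \end{array}\right], \qquad M\text{ is }m\times n,\ \text{zero block }r\times s,
\]
so $A$ has size $(m-r)\times s$ and $C$ has size $r \times (n-s)$. The Big hypothesis $r + s > \max\{m,n\}$ rearranges to
\[
(m-r) + (n-s) < m + n - \max\{m,n\} = \min\{m,n\}.
\]

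Second, I would bound the rank of any completion. Every completion $M'$ of $M$ is a constant matrix of the same block shape $\begin{smat} A' & B' \\ 0 & C' \end{smat}$ with a genuine $r \times s$ zero submatrix. Its rank is at most the rank of the top block-row $[A'\ B']$ (at most $m-r$, the number of those rows) plus the rank of the column-subspace spanned by the bottom rows (those rows vanish outside the last $n-s$ columns, contributing at most $n-s$). Therefore
\[
\rank(M') \leq (m-r) + (n-s)
\]
for every completion, and hence $\maxRank(M) \leq (m-r)+(n-s) < \min\{m,n\}$, which means $M$ is not FmR.

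There is essentially no obstacle; the only minor point worth checking is that the argument survives the degenerate conventions for $A$, $B$, $C$ (e.g.\ when $s=0$ or $r=0$, in which case the inequality is vacuous because the block is not Big). Invoking Proposition~\ref{corBig}(i) at the start is optional but makes the argument conceptually cleaner: it immediately tells us that $A$ is wide and $C$ is tall, so neither $A$ nor $C$ can itself be FmR, a fact that foreshadows the rank bound above.
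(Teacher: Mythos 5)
Your proposal is correct and follows essentially the same route as the paper: both bound $\maxRank(M)$ by $\rows(A)+\cols(C)=(m-r)+(n-s)$ using the block-triangular structure and then observe that the Big condition $r+s>\max\{m,n\}$ forces this to be strictly less than $\min\{m,n\}$. The only difference is that you spell out the justification of the rank bound for each completion, which the paper takes as standard.
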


\begin{proof} Let $M=\begin{smat} A & B \\  0 & C \end{smat}$ be as in  (\ref{MDecomp}).  If the zero block is Big then 
$$
\maxRank\begin{smat} A & B \\  0 & C \end{smat}   \leq 
\rows(A) +  \cols(C)
 =  (m-r)+(n-s)=(m+n)-(r+s)<\min\{m,n\}
$$
and therefore $M$ is not FmR.
\end{proof}

If a Big or Medium zero block is present in an ACI-matrix it makes it trivial to compute the maxRank when the diagonal blocks  are FmR. 

\begin{theorem} \label{Mrank+Mrank=Mrank}
If the ACI-matrix $\begin{smat} A & B \\  0 & C \end{smat}$ has a Big or Medium  zero block then the following conditions are equivalent:
\begin{enumerate}[(i)]
\item $A$ is FRmR and $C$ is FCmR.
\item $\maxRank\begin{smat} A & B \\  0 & C \end{smat}=\rows (A)+\cols (C)$.
\end{enumerate}
\end{theorem}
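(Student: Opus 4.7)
The plan is to prove the two implications separately using standard sub-additivity bounds for block-triangular constant matrices. Let $\tilde{M} = \begin{smat}\tilde{A}&\tilde{B}\\0&\tilde{C}\end{smat}$ denote an arbitrary completion of $M$. I will rely on three inequalities: the row-split bound $\rank(\tilde{M}) \leq \rank([\tilde{A}\,|\,\tilde{B}]) + \rank(\tilde{C})$, the column-split bound $\rank(\tilde{M}) \leq \rank(\tilde{A}) + \rank\begin{smat}\tilde{B}\\\tilde{C}\end{smat}$, and the matching lower bound $\rank(\tilde{M}) \geq \rank(\tilde{A}) + \rank(\tilde{C})$. The last follows by choosing maximally independent sets of columns from $\tilde{A}$ and from $\tilde{C}$: the corresponding columns of $\tilde{M}$ (extended by zeros in the first case, by $\tilde{B}$-entries in the second) remain linearly independent, because the zero block forces any nontrivial relation to restrict to a nontrivial relation among the columns of $\tilde{C}$.

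For the direction (i) $\Rightarrow$ (ii), I would pick completions $\tilde{A}$ of $A$ and $\tilde{C}$ of $C$ attaining $\rank(\tilde{A}) = m-r$ and $\rank(\tilde{C}) = n-s$, respectively. Since no indeterminate of an ACI-matrix appears in two different columns, the indeterminates of $A$ (living in columns $1,\ldots,s$) are disjoint from those of $B$ and $C$ (living in columns $s+1,\ldots,n$), so the two choices are simultaneously realisable; indeterminates shared between $B$ and $C$ inside a common column are already pinned down by $\tilde{C}$, and any remaining free indeterminates in $B$ may be filled arbitrarily to produce a completion $\tilde{M}$. The lower bound gives $\rank(\tilde{M}) \geq (m-r)+(n-s)$, while the row-split bound together with $\rank([\tilde{A}\,|\,\tilde{B}]) \leq m-r$ and $\rank(\tilde{C}) \leq n-s$ gives $\maxRank(M) \leq (m-r)+(n-s)$; combining the two yields (ii). Note that this direction does not use the Big or Medium hypothesis.

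For (ii) $\Rightarrow$ (i), fix a completion $\tilde{M}$ with $\rank(\tilde{M}) = (m-r)+(n-s)$. Proposition~\ref{corBig} and the Big or Medium hypothesis force $A$ to be wide or square (so $s \geq m-r$ and hence $\rank(\tilde{A}) \leq m-r$) and $C$ to be tall or square (so $r \geq n-s$ and hence $\rank(\tilde{C}) \leq n-s$); in particular $m \geq r \geq n-s$. The row-split bound then reads $(m-r)+(n-s) \leq (m-r) + \rank(\tilde{C})$, which forces $\rank(\tilde{C}) = n-s$, i.e., $\tilde{C}$ has full column rank, so $C$ is FCmR. Symmetrically, the column-split bound reads $(m-r)+(n-s) \leq \rank(\tilde{A}) + \min\{m, n-s\} = \rank(\tilde{A}) + (n-s)$ thanks to $m \geq n-s$, forcing $\rank(\tilde{A}) = m-r$ and hence $A$ is FRmR.

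The main obstacle is the case analysis: one has to confirm the dimensional relations $s \geq m-r$, $r \geq n-s$ and $m \geq n-s$ across each of the sub-cases in Proposition~\ref{corBig} (Big; Medium with $M$ tall, square, or wide). A related subtlety is the behaviour when $A$ or $C$ is degenerate; there the FRmR/FCmR conclusions hold trivially by the conventions in Definition~\ref{mfr}, but one should still check that the rank bounds $\rank(\tilde{A}) \leq m-r$ and $\rank(\tilde{C}) \leq n-s$ remain valid and that the sub-additivity arguments degenerate consistently.
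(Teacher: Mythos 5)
Your proof is correct and takes essentially the same route as the paper's: both directions rest on the row/column subadditivity bounds for the block-triangular form together with the lower bound $\rank\begin{smat}\tilde A&\tilde B\\0&\tilde C\end{smat}\geq\rank(\tilde A)+\rank(\tilde C)$, the only cosmetic difference being that you argue on a fixed maximizing completion where the paper phrases the same inequalities via $\maxRank$. Your extra remarks (that (i)$\Rightarrow$(ii) needs no Big/Medium hypothesis, and that the completions of $A$ and $C$ are simultaneously realisable because no indeterminate crosses the column split) are accurate refinements of points the paper leaves implicit.
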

\begin{proof}  
Since 0 is a Big or Medium zero block of $\begin{smat} A & B \\  0 & C \end{smat}$ then $A$ is wide or square and $C$ is tall or square. 

\begin{description}
\item[$(i) \Rightarrow (ii)$]  Let $\begin{smat} \widehat{A} & \widehat{B} \\  0 & \widehat{C}\end{smat}$  be a completion of $\begin{smat} A & B \\   0 & C \end{smat}$ such that $\widehat{A}$ and $\widehat{C}$ are full rank. Then we have  
\begin{align*}
\maxRank\begin{smat} A & B \\  0 & C \end{smat} \leq 
\rows (A) + \cols (C) =
\rank(\widehat{A})+\rank(\widehat{C}) \leq
\rank\begin{smat} \widehat{A} & \widehat{B} \\  0  & \widehat{C} \end{smat} \leq 
\maxRank\begin{smat} A & B \\  0 & C \end{smat}
\end{align*}
and the result follows.
\item[$(ii) \Rightarrow (i)$] 
 
We divide the proof in two parts:
\begin{enumerate}[(a)]
\item Note that  $\maxRank\begin{smat}B\\ C \end{smat} \leq \cols(C)$ and $\maxRank(A)\leq \rows(A)$, so
$$
\maxRank\begin{smat} A & B \\  0 & C \end{smat} \leq 
\maxRank\begin{smat} A \\  0 \end{smat}+ \maxRank\begin{smat}B\\ C \end{smat} \leq
\rows(A) +  \cols(C).
$$ 
Since   $
\maxRank\begin{smat} A & B \\  0 & C \end{smat} = 
\rows(A) +  \cols(C)
$ then  $\maxRank(A)=\rows(A)$.
So $A$ is FRmR.  
\item Note that  $\maxRank\begin{smat} A & B \end{smat} \leq \rows(A)$ and $\maxRank(C)  \leq \cols(C)$, so 
$$
\maxRank\begin{smat} A & B \\  0 & C \end{smat} \leq 
\maxRank\begin{smat} A & B \end{smat}+ \maxRank\begin{smat}0 &  C \end{smat} \leq
\rows(A) +  \cols(C).
$$ 
Since   $
\maxRank\begin{smat} A & B \\  0 & C \end{smat} = 
\rows(A) +  \cols(C)
$ then  $\maxRank(C)=\cols(C)$.
So $C$ is FCmR.  
\end{enumerate}
\end{description}
\end{proof}

\section{Factor and semifactor sets}

Frequently, we will need to permute the columns of an ACI-matrix so that a certain set $F$ of columns appear as the first $\#F$ columns. This will be achieved by right multiplying the ACI-matrix by the appropriate permutation matrix.
\begin{notation}
Let $F=\{f_1,\ldots, f_s\}\subset \{1,\ldots, n\}$ and let $\overline{F}=\{1,\ldots, n\}-\{f_1,\ldots, f_s\}=\{g_1, \ldots,g_{n-s}\}$.
 We define  the permutation $\sigma_F$ of $\{1,\ldots,n\}$  by 
$$
\begin{cases}
 \sigma_F(f_i) =i & \text{ for all } i \in \{1,\ldots,s\}  \\ 
 \sigma_F(g_j)=s+j & \text{ for all } j \in \{1,\ldots,n-s\}
 \end{cases} .
 $$
Note that
$
 \sigma_F(F)= \{1,\ldots, s\}$ and $\sigma_F(\overline{F})= \{s+1,\ldots, n\}
$.
Finally,  ${Q_{F}}$   denotes  the permutation matrix  of order $n$ such that for each $k=1,\ldots,n$ the $k-$th column of any $m \times n$ ACI-matrix $M$    is equal to the $\sigma_F(k)-$th column of $M Q_F$.  \end{notation}

We now introduce two concepts associated to ACI-matrices: factor and semifactor sets. It will be crucial  in this work to determine when an ACI-matrix has factor sets or has semifactor sets, and also to determine the relation between all of its factor sets or between all of its semifactor sets.

\begin{definition}
Let $M$ be an $m\times n$ ACI-matrix. The  set $F\subseteq \{1, \ldots, n\}$ is a  {\bf factor set} of $M$ if there exists a nonsingular matrix $R$ of order $m$  such that \vspace{3mm}
\begin{equation} \label{defDiv27}
R M Q_{F}=\left[\begin{array}{cc} \smash{\overset{\sigma_F(F)}{\overbrace{A}}} &  B\\  0 & C \end{array}\right]
\end{equation}
where  the zero block   is Big,  $A$ is (wide) FRmR  and  $C$ is (tall) FCmR. We will say that   $RMQ_F$ is  an \textbf{$F$-decomposition} of $M$. 
\end{definition}


Note that in (\ref{defDiv27}) $A$  is wide and $C$ is  tall  since the zero block  is Big. For completeness we  put  in Table~\ref{tabla1} all  cases that are possible in (\ref{defDiv27}) for $R M Q_{F}$ taking into account  when degenerate ACI-submatrices appear. 
  \begin{table}[!ht] \small
  \centering
\begin{tabular}{|p{40mm}||p{40mm}|p{35mm}|} \hline
 &  $A$  wide non-degenerate \newline and FRmR   & $A$ wide degenerate    \\ \hline\hline
$C$  tall non-degenerate   \newline and FCmR  & $\begin{smat} A & B \\ 0 & C \end{smat}$  & $\begin{smat} 0 & C \end{smat}$ \\ \hline
$C$ tall degenerate    & $\begin{smat} A  \\0  \end{smat}$\newline    $F=\{1, \ldots, n\}$ & $\begin{smat} 0  \end{smat}$ \newline    $F=\{1, \ldots, n\}$ \\ \hline
\end{tabular}
\caption{\label{tabla1}$\protect R M Q_{F}$ for factor sets.}
\end{table}
%
%

Now we introduce the concept of semifactor sets, where the Medium zero blocks take the same role as the Big zero blocks for factor sets.

\begin{definition} \label{semifactorSetDefinition}
Let $M$ be an $m\times n$ ACI-matrix. The  set $F\subseteq \{1, \ldots, n\}$ is a  {\bf semifactor set} of $M$ if there exists a nonsingular $R$ of order $m$  such that \vspace{3mm}
\begin{equation} \label{defDiv28}
R M Q_{F}=\left[\begin{array}{cc} \smash{\overset{\sigma_F(F)}{\overbrace{A}}} &  B\\  0 & C \end{array}\right]
\end{equation}
where the zero block is Medium,  $A$ is (wide or square) FRmR  and  $C$ is (tall or square) FCmR. Then $RMQ_F$ is called an   $F$\textbf{-semidecomposition} of $M$.

\end{definition}

  Note that in (\ref{defDiv28}) $A$ or/and $C$ are square since the zero block  is Medium. For completeness we  put  in Table~\ref{tabla2} all  cases that are possible in (\ref{defDiv28}) for $R M Q_{F}$ taking into account  when degenerate ACI-submatrices appear. 
  \begin{table}[!ht] \small
  \centering
\begin{tabular}{|p{30mm}||p{46mm}|p{45mm}|p{43mm}|} \hline
 &  $A$  wide/square \newline non-degenerate  and  FRmR  & $A$ wide  degenerate  & $A$ void   \\ \hline\hline
 $C$  tall/square \newline non-degenerate \newline and  FCmR  & Case 1: $\begin{smat} A & B \\ 0 & C \end{smat}$  & Case 2: $\begin{smat} 0 & C \end{smat}$ \newline  $C$ square  since 0 is Medium
& Case 3: $\begin{smat} C \end{smat}$  \newline Tall degenerate zero block \newline $F=\emptyset$  \\ \hline
 $C$ tall  degenerate  & Case 4: $\begin{smat} A  \\0  \end{smat}$\newline  $A$ square  since 0 is Medium
\newline   $F=\{1, \ldots, n\}$ &  $\begin{smat} 0  \end{smat}$  \newline    NOT possible since 0 is Big & Degenerate \\ \hline
$C$ void   & Case 5: $\begin{smat} A  \end{smat}$ \newline Wide degenerate zero block \newline   $F=\{1, \ldots, n\}$ & Degenerate & Degenerate \\ \hline
\end{tabular}
\caption{\label{tabla2}$\protect R M Q_{F}$ for semifactor sets.}
\end{table}

\newpage

Table~\ref{tabla2} shows that FmR ACI-matrices have at least one semifactor set. Note that if $M$ is FmR then $M$ is wide FRmR or square FmR or tall FCmR. Now if $M$ is  wide/square FRmR then   $\{1, \ldots, n\}$ is a semifactor set since we can always take  $M=A$ with $C$ void and the Medium zero block being  wide degenerate (Case 5 in Table~\ref{tabla2}); and if $M$ is  tall/square FCmR then $\emptyset$ is a semifactor set since we can always take  $M= C$ with $A$ void and  the Medium zero block being  tall degenerate (Case 3 in Table~\ref{tabla2}).
 
In the next result we characterize when an ACI-matrix has a factor or a semifactor set. As we will see, the part corresponding to factor sets is quite related with Theorem~\ref{SubmatrizCerosFirst}.

 \begin{proposition} \label{maxRank<->noFactorSet} 
The following assertions about factor and semifactor sets  are true:
\begin{enumerate}
\item[$(i)$] An ACI-matrix has a factor set if and only if it is not FmR.
\item[$(ii)$] An ACI-matrix has a semifactor set if and only if it is FmR.
\end{enumerate}
\end{proposition}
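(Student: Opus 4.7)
The plan is to prove the two biconditionals separately, drawing on the structural results already established: Theorem~\ref{SubmatrizCerosFirst} for producing zero-block decompositions, Proposition~\ref{corBig} for translating the Big/Medium condition into shape conditions on $A$ and $C$, Theorem~\ref{Mrank+Mrank=Mrank} for upgrading the diagonal blocks to FRmR and FCmR, and Proposition~\ref{FRmR-FCmR} for propagating FmR upward. The forward direction of $(i)$ is immediate: an $F$-decomposition exhibits $M$ as equivalent to $\begin{smat} A & B \\ 0 & C\end{smat}$ with a Big zero block, which by Proposition~\ref{Big->nofFullMaxRank} is not FmR, and equivalence preserves maxRank.

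For the converse of $(i)$, set $\rho := \maxRank(M)$; by hypothesis $\rho < \min\{m,n\}$, so Theorem~\ref{SubmatrizCerosFirst} yields $r, s \geq 1$ and $R, Q$ with $RMQ = \begin{smat} A & B \\ 0 & C\end{smat}$ and $(m-r)+(n-s) = \rho$. From $r+s = m+n-\rho > \max\{m,n\}$ the zero block is Big, so by Proposition~\ref{corBig} $A$ is wide and $C$ is tall. Because $\rows(A)+\cols(C) = \rho = \maxRank(RMQ)$, the implication $(ii) \Rightarrow (i)$ of Theorem~\ref{Mrank+Mrank=Mrank} upgrades $A$ to FRmR and $C$ to FCmR. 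To convert $Q$ into the canonical $Q_F$, take $F$ to be the preimage under $Q$'s permutation of $\{1,\ldots,s\}$; the two permutations differ at most by internal reorderings within $\{1,\ldots,s\}$ and within $\{s+1,\ldots,n\}$, which leave the FRmR/FCmR status of the blocks intact.

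For $(ii)$, the direction ``FmR $\Rightarrow$ semifactor set'' is by direct construction: an FmR matrix is either wide/square FRmR, in which case $F = \{1,\ldots,n\}$ with $R = I$ works (Case~5 of Table~\ref{tabla2}), or tall/square FCmR, in which case $F = \emptyset$ with $R = I$ works (Case~3). The converse ``semifactor set $\Rightarrow$ FmR'' is a case check on Table~\ref{tabla2}. In Case~1 the Medium condition forces, via Proposition~\ref{corBig}, one of three patterns: $M$ tall with $A$ square and $C$ tall, $M$ square with both square, or $M$ wide with $A$ wide and $C$ square; in each, parts $(3)$-$(5)$ of Proposition~\ref{FRmR-FCmR} deliver $M$ FCmR, square FmR, or FRmR, which in each case coincides with FmR for the shape of $M$. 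The degenerate Cases~2-5 reduce to $M$ coinciding with $A$ or $C$ (possibly with harmless zero padding), and the FmR status transfers directly.

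The main technical obstacle lies in the converse of $(i)$: Theorem~\ref{SubmatrizCerosFirst} only produces a decomposition whose dimensions encode the bound on maxRank, with no assertion about FRmR or FCmR of the diagonal blocks. The key trick is to choose $\rho$ equal to $\maxRank(M)$ exactly, so that the generic inequality $\maxRank \leq \rows(A) + \cols(C)$ becomes an equality---precisely the trigger that lets Theorem~\ref{Mrank+Mrank=Mrank} elevate $A$ to FRmR and $C$ to FCmR.
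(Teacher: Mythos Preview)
Your proof is correct and follows essentially the same approach as the paper: Proposition~\ref{Big->nofFullMaxRank} for $(i)\Rightarrow$, Theorem~\ref{SubmatrizCerosFirst} with $\rho=\maxRank(M)$ combined with Theorem~\ref{Mrank+Mrank=Mrank} for $(i)\Leftarrow$, the discussion after Table~\ref{tabla2} for $(ii)\Leftarrow$, and Proposition~\ref{corBig} together with Proposition~\ref{FRmR-FCmR} for $(ii)\Rightarrow$. Your version is slightly more explicit in two places---you address the passage from an arbitrary permutation $Q$ to the canonical $Q_F$, and you spell out the three Medium-shape subcases individually rather than collapsing them into the single observation that $A$ or $C$ being square forces both diagonal blocks to share FRmR or FCmR status---but these are presentational refinements, not a different argument.
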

\begin{proof} Let $M$ be an $m\times n$ ACI-matrix. 
\begin{description}
\item[$(i) \Rightarrow$]  If $M$ has  a factor set $F$ then  there exists a nonsingular $R$ such that \vspace{3mm}
\begin{equation*}
R M Q_{F}=\left[\begin{array}{cc} \smash{\overset{\sigma_F(F)}{\overbrace{A}}} &  B\\  0 & C \end{array}\right]
\end{equation*}
where the zero block is Big. So $R M Q_{F}$ is not FmR (Proposition~\ref{Big->nofFullMaxRank}) and  thus $M$ is not FmR.

\item[$\quad \Leftarrow$]  If  $M$  is not FmR then $\maxRank(M)<\min\{m, n\}$ and, by  Theorem~\ref{SubmatrizCerosFirst}, for some positive integers $r$ and $s$ with $\maxRank(M)=(m-r)+(n-s)$ there exist a nonsingular   $R$ and a permutation  $Q$ such that 
$RMQ=\begin{smat} A & B \\  0 & C \end{smat}$
 where 0 is an $r\times s$ zero block. Note that 
$$
 \min\{m,n\} > \maxRank(M)=(m-r)+(n-s) \ \Rightarrow \ r+s > m+n-\min\{m,n\} = \max\{m,n\}
$$
so the zero block of $\begin{smat} A & B \\  0 & C \end{smat}$ is Big.
On the other hand
 \begin{align*}
\maxRank\begin{bmatrix} A & B \\  0 & C \end{bmatrix}=\maxRank(M)= (m-r)+(n-s) = \rows(A)+\cols(C)
\end{align*}
which  implies (see Theorem~\ref{Mrank+Mrank=Mrank}) that $A$ is FRmR and $C$ is FCmR. 
 And so, the existence of a factor set for $M$ is proved.

\item[$(ii) \Rightarrow$] 

 If $M$ has a semifactor set $F$ then  there exists a nonsingular $R$ such that \vspace{3mm}
\begin{equation}\label{defDiv29}
R M Q_{F}=\left[\begin{array}{cc} \smash{\overset{\sigma_F(F)}{\overbrace{A}}} &  B\\  0 & C \end{array}\right]
\end{equation}
where the zero block is Medium, $A$ is FRmR and $B$ is FCmR. 
 By Proposition~\ref{corBig} we know that
$A$ or/and $C$  are square, and so both are FRmR or both are FCmR. This implies (see Proposition~\ref{FRmR-FCmR}) that $R M Q_{F}$ is FRmR or FCmR. So $R M Q_{F}$ is FmR, and then $M$ is FmR.

\item[$\quad \Leftarrow$] If $M$ is FmR then, as we have seen just after  Table~\ref{tabla2}, $M$ has at least one semifactor set. 
\end{description}
\end{proof}


\section{Linear independent rows}

Let $M$ be  an ACI-matrix. Remember that each column of $M$ has its own indeterminates. Suppose that the first column of $A$ has intedeterminates $x_1, x_2, \ldots, x_i$; that the second column $y_1, y_2, \ldots, y_j$; and so on. Now, let us represent the 
 vector space where  the entries of the first column lie by
$\mathbb{F}+\mathbb{F}x_1+\ldots+\mathbb{F}x_i$, and for the second column $\mathbb{F}+\mathbb{F}y_1+\ldots+\mathbb{F}y_j$, and so on. All these sets are  vector spaces over $\mathbb{F}$. And the row vectors of $M$ are in the vector space
\begin{align} \label{vectorSpaceACIrows}
 \big(\mathbb{F}+\mathbb{F}x_1+\ldots+\mathbb{F}x_i\big)\times \big(\mathbb{F}+\mathbb{F}y_1+\ldots+\mathbb{F}y_j\big)\times \ldots
\end{align}
From now on when we talk about linear independence or linear dependence of the rows of an ACI-matrix $M$, we are talking about the vector space given in~(\ref{vectorSpaceACIrows}).

The next Proposition and Remark expose the relation of ACI-matrices with linear independent rows and  ACI-matrices which are FRmR.  It is important to keep in mind this relation.

\begin{proposition} \label{SC->lir}
An FRmR ACI-matrix has linear independent rows.
\end{proposition}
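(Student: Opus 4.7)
The plan is to prove the contrapositive: if the rows of the ACI-matrix $M$ are linearly dependent in the polynomial vector space described in~(\ref{vectorSpaceACIrows}), then $M$ cannot be FRmR. Concretely, I would assume there exist scalars $\alpha_1,\ldots,\alpha_m\in\mathbb{F}$, not all zero, such that $\sum_{i=1}^m \alpha_i\,\mathrm{row}_i(M)=0$ as a polynomial row vector, i.e.\ the coefficient of every monomial (both the constant monomial and each indeterminate) in each column is zero.

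The key observation is that every completion $\widehat{M}$ is obtained by substituting elements of $\mathbb{F}$ for the indeterminates, and this substitution is an $\mathbb{F}$-linear map from the ambient polynomial vector space of~(\ref{vectorSpaceACIrows}) into $\mathbb{F}^n$. Since $\mathbb{F}$-linear combinations commute with this evaluation, from $\sum_i \alpha_i\,\mathrm{row}_i(M)=0$ one immediately gets $\sum_i \alpha_i\,\mathrm{row}_i(\widehat{M})=0$ for every completion $\widehat{M}$. Hence the rows of every completion satisfy the same nontrivial linear relation, so $\rank(\widehat{M})<m$ for all completions, which gives $\maxRank(M)<\rows(M)$ and contradicts FRmR.

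I do not expect any real obstacle here; the proof is essentially a one-line observation that linear dependence of polynomial rows passes to every specialization. The only thing to be careful about is the degenerate case: the tall degenerate $p\times 0$ ACI-matrix is FRmR by the convention introduced after Definition~\ref{mfr}, and in the zero-dimensional ambient space the statement ``linearly independent rows'' must be interpreted by that same convention (or else treated as a vacuous boundary case), so it does not affect the argument for non-degenerate $M$.
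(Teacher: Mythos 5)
Your proposal is correct and is essentially the same argument as the paper's: the paper also proves the contrapositive, noting that a nontrivial $\mathbb{F}$-linear relation among the polynomial rows persists in every completion, so every completion has rank below $m$ and $M$ is not FRmR. Your extra remarks about specialization being $\mathbb{F}$-linear and about the degenerate case are just a more explicit spelling-out of the same one-line observation.
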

\begin{proof}
Suppose that $M$ is an $m\times n$ ACI-matrix with linear dependent rows. Then any completion of $M$ is a constant matrix with linear dependent rows whose rank is less than $m$. So $\maxRank (M)<\rows(M)$ and so M is not FRmR. 
\end{proof}

\begin{remark}
The linear independence of the rows of an ACI-matrix does not imply that it is FRmR. For example, over any field the  ACI-matrix
\begin{align*}
 \begin{bmatrix} 1 & 1 & 1 & 1 \\ 1 & 1 & 1 & x \\ 1 & 1 & 1 & y \end{bmatrix}
\end{align*}
has linear independent rows and maxRank  equal to 2. So it is not FRmR. 
\end{remark}

The next  result will be relevant in the proof of a key result: Lemma~\ref{RLItoFRC}. In this somewhat long statement the condition that we want to emphasize is that $A_1$ as well as $A_2$ have linear independent rows. This condition will reappear in Lemma~\ref{RLItoFRC}, and actually it will be a central theme of many proofs of our work.

\begin{lemma}\label{TwoDecompDivisor2}
Consider two ACI-matrices  of size $m \times n$ given by 
\begin{equation*}   
M_{1}=\begin{bmatrix} A_{1} & B_{1} \\  0 & C_{1} \end{bmatrix} \quad \text{and} \quad 
M_{2}=\begin{bmatrix}  A_{2} & B_{2} \\  0 & C_{2} \end{bmatrix}.
\end{equation*} 
where $A_{1}$ and $A_{2}$ have the same number $n_1$ of columns. Let   $R$ be a nonsingular constant matrix of order $m$ and let  $Q$ and $Q'$ be permutation matrices of orders $n_1$ and $(n-n_1)$ respectively, such that 
\begin{equation*} 
\begin{bmatrix} A_{2} & B_{2} \\  0 & C_{2} \end{bmatrix}=R\begin{bmatrix} A_{1} & B_{1} \\  0 & C_{1} \end{bmatrix} \begin{bmatrix} Q & 0 \\  0 & Q' \end{bmatrix}.
\end{equation*} 
If $A_{1}$ as well as $A_{2}$ have linearly independent rows then $A_{1} \sim A_{2}$  and $C_{1} \sim C_{2}$.
\end{lemma}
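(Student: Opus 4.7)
The plan is to exploit the block structure by partitioning $R$ the same way $M_1$ and $M_2$ are partitioned horizontally, and then to read off algebraic conditions from the hypothesized matrix equation.

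First I would let $m_i$ denote the number of rows of $A_i$ and write
$$
R = \begin{bmatrix} R_{11} & R_{12} \\ R_{21} & R_{22} \end{bmatrix}
$$
with $R_{11}$ of size $m_2 \times m_1$ and $R_{22}$ of size $(m-m_2)\times(m-m_1)$. Performing the block multiplication produces four identities; the two coming from the left block-column are $A_2 = R_{11} A_1 Q$ and $0 = R_{21} A_1 Q$, while the two from the right block-column express $B_2$ and $C_2$ in terms of $B_1$ and $C_1$, in particular $C_2 = (R_{21} B_1 + R_{22} C_1) Q'$.

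The crux will be to show $R_{21} = 0$. From $R_{21} A_1 Q = 0$ and the invertibility of $Q$ one gets $R_{21} A_1 = 0$. Each row of $R_{21}$ is a constant vector in $\mathbb{F}^{m_1}$, and left-multiplying $A_1$ by such a vector produces an $\mathbb{F}$-linear combination of the rows of $A_1$ in the ambient ACI row space described in~(\ref{vectorSpaceACIrows}). The hypothesis that $A_1$ has linearly independent rows will force this combination to vanish only when the coefficients are zero, so $R_{21}=0$. I expect this to be the main conceptual step, since a plain rank argument does not suffice: one has to invoke the ACI-specific notion of linear independence of rows introduced just before the lemma.

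With $R_{21}=0$ the matrix $R$ is block upper triangular, so
$$
m = \rank(R) \le \rank(R_{11}) + \rank(R_{22}) \le \min(m_1,m_2) + \min(m-m_1,m-m_2) \le m.
$$
Equality throughout forces $m_1 = m_2$ and that $R_{11}$ and $R_{22}$ are square and nonsingular. Then $A_2 = R_{11} A_1 Q$ with $R_{11}$ nonsingular and $Q$ a permutation yields $A_1 \sim A_2$, and $C_2 = R_{22} C_1 Q'$ yields $C_1 \sim C_2$. Notice that the hypothesis on $A_2$ does not actually enter this argument; the symmetric assumption in the statement is presumably for exposition, as the lemma will be applied with the roles of $M_1$ and $M_2$ interchangeable.
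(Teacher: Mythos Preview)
Your overall plan---partition $R$ conformally, use $R_{21}A_1Q=0$ together with the linear independence of the rows of $A_1$ to force $R_{21}=0$, then read off the equivalences from the block upper triangular form of $R$---is exactly the paper's approach. The gap is in the step where you pass from $R_{21}=0$ to $m_1=m_2$ and the nonsingularity of $R_{11},R_{22}$.

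The inequality you invoke,
\[
\rank(R)\ \le\ \rank(R_{11})+\rank(R_{22}),
\]
is false for block upper triangular matrices with \emph{rectangular} diagonal blocks; the well-known inequality goes the other way, $\rank(R)\ge\rank(R_{11})+\rank(R_{22})$. For instance, with $m=3$, $m_1=1$, $m_2=2$ take $R=I_3$: then $R_{11}=\begin{smat}1\\0\end{smat}$, $R_{22}=\begin{smat}0&1\end{smat}$, and $\rank(R_{11})+\rank(R_{22})=2<3=\rank(R)$. So your chain of inequalities collapses, and with it the conclusion $m_1=m_2$.

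This also shows that your closing remark is mistaken: the hypothesis on $A_2$ is genuinely needed. Without it one can have $m_2>m_1$; e.g.\ with $M_1=I_3$, $n_1=2$, $m_1=2$ and $R=\begin{smat}1&0&0\\0&1&0\\0&1&1\end{smat}$ one obtains $M_2=R$ with $A_2=\begin{smat}1&0\\0&1\\0&1\end{smat}$ (rows dependent) and $m_2=3$, so $A_1\not\sim A_2$. The paper uses the $A_2$ hypothesis up front: from $R\begin{smat}A_1\\0\end{smat}Q=\begin{smat}A_2\\0\end{smat}$ with $R$ nonsingular and $Q$ a permutation, the dimension of the row span is preserved, so linear independence of the rows of both $A_1$ and $A_2$ forces $m_1=m_2$. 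Once the diagonal blocks of $R$ are square, $R_{21}=0$ and $\det R\neq 0$ immediately give $R_{11},R_{22}$ nonsingular, and your final two lines go through.
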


\begin{proof} By hypothesis $A_1$ and $A_2$ have the same number $n_1$ of columns, but nothing is said about the number of rows.  Nevertheless,  as  
$ R \begin{smat} A_{1}  \\  0  \end{smat} Q=  
 \begin{smat} A_{2}  \\  0  \end{smat}$
where $A_{1}$ and $A_{2}$ have linearly independent rows then  $A_{1}$ and $A_{2}$ also have the same number  $m_1$ of rows. Therefore $A_{1}$ and $A_{2}$ have the same size $m_1\times n_1$. Thus $C_{1}$ and $C_{2}$  also have the same size $(m-m_1)\times (n-n_1)$. Writing  
$$R=\begin{bmatrix}S& T \\  U & V \end{bmatrix}$$
 as a block matrix where  $S$ is $m_1\times m_1$ and   $V$ is  $(m-m_1)\times (m-m_1)$ we have 
\begin{equation*} \label{blockEquiv}
  \begin{bmatrix} A_{2} & B_{2} \\  0 & C_{2} \end{bmatrix}= \begin{bmatrix} S& T \\  U & V \end{bmatrix} \begin{bmatrix} A_{1} & B_{1} \\  0 & C_{1} \end{bmatrix} \begin{bmatrix} Q & 0 \\  0 & Q' \end{bmatrix} = \begin{bmatrix} * & * \\  U A_{1}  Q & * \end{bmatrix}.
\end{equation*}
Since  $UA_1Q=0$,   $A_{1}$ has  linearly independent rows, and $Q$ is a permutation   then $U=0$. So
\begin{equation*} \label{blockEquiv2}
\begin{bmatrix} A_{2} & B_{2} \\  0 & C_{2} \end{bmatrix}=
 \begin{bmatrix} S& T \\  0 & V \end{bmatrix} \begin{bmatrix} A_{1} & B_{1} \\  0 & C_{1} \end{bmatrix} \begin{bmatrix} Q & 0 \\  0 & Q' \end{bmatrix} = \begin{bmatrix} {\bf S}A_{1} Q & * \\  0 & V C_{1} Q' \end{bmatrix}.\end{equation*}
As  $S$ and $V$  are  nonsingular then   $A_{1} \sim A_{2}$  and $C_{1}\sim C_{2}$.
\end{proof}

Let $M$ be an  ACI-matrix. Imagine that we  want to find out if $F$ is a factor or a semifactor set of $M$. It makes sense to try to find an  equivalent ACI-matrix  with as many zero rows as possible in the ACI-submatrix formed by the columns indexed by $F$. An efficient way to do this is the procedure of a sweep from bottom to top that we are going to introduce now.

\begin{definition}
Let $M$ be an $m\times n$ ACI-matrix and let $F\subseteq \{1,\ldots,n\}$. A \textbf{sweep from bottom to top in $M$} is a procedure that transforms $M$   into an equivalent ACI-matrix that has all its nonzero rows linearly independent. It   consists of $m-1$ steps and   step $i$  is the following:
\begin{description}
\item[step i:] If it is possible, make the ($m-i$)-th row equal to the zero row by adding  linear combinations of   rows $m-i+1,\dots,m$ below it.
\end{description}
A \textbf{sweep from bottom to top in $M$ with respect to the columns of $F$}  is a procedure  as the previous one  but only requiring to do zeros in the entries allocated in the columns corresponding to $F$.
\end{definition}

\begin{example}
For the field or reals consider the ACI-matrix
\[
M=\begin{bmatrix}
x+2 & 1  & z \\
x+1 & 8y & 3z-5 \\
x & 4y & z-2 \\
1 & 4y & 2z-3
\end{bmatrix}.
\]
If we do a sweep from bottom to top in $M$ then
\[
\begin{bmatrix}[ccc]
x+2 & 1  & z \\
x+1 & 8y & 3z-5 \\
x & 4y & z-2 \\
1 & 4y & 2z-3
\end{bmatrix}
\stackrel{\text{step 2}}{\longrightarrow}
\begin{bmatrix}
x+2 & 1  & z \\
0 & 0 &0 \\
x & 4y & z-2 \\
1 & 4y & 2z-3
\end{bmatrix}
\]
and we have finished with an equivalent ACI-matrix whose  nonzero rows   are linearly independent. 

If we do a sweep from bottom to top in $M$ with respect to  $F=\{ 2 \}$  then 
\[
\begin{bmatrix}[c|c|c]
x+2 & 1  & z \\
x+1 & 8y & 3z-5 \\
x & 4y & z-2 \\
1 & 4y & 2z-3
\end{bmatrix}
\stackrel{\text{step 1}}{\longrightarrow}
\begin{bmatrix}[c|c|c]
x+2 & 1  & z \\
x+1 & 8y & 3z-5 \\
x-1 & 0 & -z+1 \\
1 & 4y & 2z-3
\end{bmatrix}
\stackrel{\text{step 2}}{\longrightarrow}
\begin{bmatrix}[c|c|c]
x+2 & 1  & z \\
x-1 & 0 &-z+1 \\
x-1 & 0 & -z+1 \\
1 & 4y & 2z-t
\end{bmatrix} 
\] 
and we have finished with an equivalent ACI-matrix whose nonzero rows in the second column  are linearly independent.  Note that the linear combinations employed to make zeros in the second column of $M$ were extended to the entire rows of $M$.  
\end{example}

\medskip

The definition of factor (resp. semifactor) set just requires one decomposition to exist. If we know somehow that $F$ is a factor  (resp. semifactor)  set of $M$  and we perform a sweep from bottom to top in $M$ with respect to $F$,  we might ask the following question: Do we always arrive, up to permutation of rows and columns, to an $F$-decomposition (resp. $F$-semidecomposition)? The answer is yes, as we will see in the next result where we assume that the sweep from bottom to top with respect to $F$ has already occurred. Then we only need to permute rows and columns to leave a zero block in the bottom left part.  
%

\begin{lemma}\label{RLItoFRC}
Let $M$ be an $m\times n$ ACI-matrix.  Suppose that   $F$ is a factor (resp. semifactor) set of  $M$ and  $P$ is a permutation matrix of order $m$ such that  \bigskip

\begin{equation} \label{NAQD5}
 P M Q_F = \left[\begin{array}{cc}  \smash{\overset{\sigma_F(F)}{\overbrace{A}}} & B \\  0 & C \end{array}\right]
\end{equation} 
with  $A$  having  linearly independent rows. Then~(\ref{NAQD5}) is an $F$-decomposition (resp. $F$-semidecomposition). That is:   the zero block is Big (resp. Medium), $A$ is FRmR and $C$ is FCmR.
\end{lemma}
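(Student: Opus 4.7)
The plan is to apply Lemma~\ref{TwoDecompDivisor2} to compare the given block form of $PMQ_F$ with a ``canonical'' $F$-decomposition (resp.\ $F$-semidecomposition) of $M$, whose existence is guaranteed by the hypothesis that $F$ is a factor (resp.\ semifactor) set. Lemma~\ref{TwoDecompDivisor2} requires two block decompositions with a zero lower-left block whose upper-left blocks have the same number of columns and both have linearly independent rows; the conclusion that $A$ is FRmR, $C$ is FCmR and the zero block has the right size will then follow from the equivalences it provides.

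First, because $F$ is a factor (resp.\ semifactor) set of $M$, there is a nonsingular $R$ of order $m$ with
\[
RMQ_F \;=\; \begin{bmatrix} A' & B' \\ 0 & C' \end{bmatrix},
\]
where the zero block is Big (resp.\ Medium), $A'$ is FRmR and $C'$ is FCmR. By Proposition~\ref{SC->lir} the matrix $A'$ has linearly independent rows, and since $MQ_F$ places the columns of $M$ indexed by $F$ as its first $\#F$ columns, both $A$ and $A'$ have $n_1=\#F$ columns.

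Next, I rewrite the two block forms together as
\[
\begin{bmatrix} A' & B' \\ 0 & C' \end{bmatrix} \;=\; (RP^{-1}) \begin{bmatrix} A & B \\ 0 & C \end{bmatrix} \begin{bmatrix} I_{n_1} & 0 \\ 0 & I_{n-n_1} \end{bmatrix},
\]
with $RP^{-1}$ nonsingular. Since both $A$ and $A'$ have linearly independent rows, Lemma~\ref{TwoDecompDivisor2} applies and yields $A \sim A'$ and $C \sim C'$; in particular $A,A'$ have the same size and so do $C,C'$.

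Finally, from $A\sim A'$ with $A'$ FRmR, Proposition~\ref{FRmR-FCmR}(1) gives that $A$ is FRmR, and from $C\sim C'$ with $C'$ FCmR, Proposition~\ref{FRmR-FCmR}(2) gives that $C$ is FCmR. The size equalities transfer the shape (wide/tall in the factor case, wide-or-square/tall-or-square in the semifactor case) of $A',C'$ to $A,C$, so Proposition~\ref{corBig} certifies that the zero block of $PMQ_F$ is Big (resp.\ Medium). Hence (\ref{NAQD5}) is an $F$-decomposition (resp.\ $F$-semidecomposition). I do not anticipate a genuine obstacle here: the substance of the argument is already packed into Lemma~\ref{TwoDecompDivisor2}, and the only work is to verify that the two given block forms of $MQ_F$ are admissible inputs to that lemma, which the linear independence hypothesis on $A$ together with Proposition~\ref{SC->lir} delivers at once.
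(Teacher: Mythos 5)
Your proposal is correct and follows essentially the same route as the paper: both compare $PMQ_F$ with a canonical $F$-decomposition $RMQ_F$ via the identity $\begin{smat} A' & B' \\ 0 & C' \end{smat}=RP^{-1}\begin{smat} A & B \\ 0 & C \end{smat}$, invoke Proposition~\ref{SC->lir} to get linear independence of the rows of $A'$, and then apply Lemma~\ref{TwoDecompDivisor2} to transfer FRmR/FCmR and the Big/Medium size of the zero block.
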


\begin{proof} 
Note that $PMQ_F$ is obtained from $M$ by permuting its rows by $P$ and its columns by $Q_F$.

 As $F$ is a  factor (resp. semifactor) set of $M$ then there exists a nonsingular $R$ such that \bigskip

\begin{equation} \label{taqs}
 R M Q_F= \begin{bmatrix} \smash{\overset{\sigma_F(F)}{\overbrace{A'}}} & B' \\  0 & C' \end{bmatrix}
\end{equation}
is an $F$-decomposition (resp. $F$-semidecomposition). Therefore   $A'$ is FRmR and so, by Proposition~\ref{SC->lir}, it has all its rows linearly independent. Note that
\begin{equation*} \label{}
\begin{bmatrix} A' & B' \\  0 & C' \end{bmatrix}=R M Q_F=
R\left( P^{-1} \left[\begin{array}{cc}  A & B \\  0 & C \end{array}\right]  Q_F^{-1}  \right)Q_F=
R P^{-1}\left[\begin{array}{cc}  A & B \\  0 & C \end{array}\right] .
\end{equation*}
From Lemma~\ref{TwoDecompDivisor2} we conclude that $A\sim A'$ and $C\sim C'$. 
As $A'$ and $C'$ are FmR then $A$ and $C$ are also FmR. And since the zero block of (\ref{taqs}) is Big (resp. Medium) then the zero block of (\ref{NAQD5}) will also be Big (resp. Medium). So $PMQ_F$ is an $F$-decomposition (resp. $F$-semidecomposition).
\end{proof}

\section{The union and intersection of factors and of semifactor sets}

Most of the heavy lifting of the main result is done in this section.
  In the following three results we will study the relative position of two factor sets or of two semifactor sets of an   ACI-matrix. It is important to recall (see Proposition~\ref{maxRank<->noFactorSet}) that  an ACI-matrix has a factor set if and only if it is not FmR, and that an ACI-matrix has a semifactor set if and only if it is  FmR.

\begin{lemma} \label{constant-nonFullRank.2factorSetsCantBeDisjoint}
Two factor sets of an ACI-matrix
can not be disjoint.
\end{lemma}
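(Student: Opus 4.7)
The plan is to derive a contradiction by pushing $\maxRank(M)$ strictly below the value dictated by the $F_1$-decomposition, contradicting the factor-set constraint imposed by $F_2$. Let $s_i = |F_i|$ and let $r_i$ denote the number of rows of the zero block in an $F_i$-decomposition
\[
R_i M Q_{F_i} = \begin{bmatrix} A_i & B_i \\ 0 & C_i \end{bmatrix}.
\]
Since each zero block is Big, we have $r_i + s_i > \max\{m,n\} \geq m$, and Theorem~\ref{Mrank+Mrank=Mrank} records $\maxRank(M) = (m - r_i) + (n - s_i)$ for $i=1,2$.

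The first ingredient I would extract is the identity $\maxRank(M_{F_i}) = m - r_i$, where $M_{F_i}$ denotes the submatrix of $M$ formed by the columns indexed by $F_i$. This is read directly off the first $s_i$ columns of the $F_i$-decomposition: $R_i M_{F_i}$ is equivalent (via the column permutation internal to $F_i$ induced by $Q_{F_i}$) to $\begin{smat} A_i \\ 0 \end{smat}$, whose maxRank equals $\maxRank(A_i) = m - r_i$ because $A_i$ is FRmR.

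Now I would exploit the disjointness. Since $F_1 \cap F_2 = \emptyset$, the columns of $M_{F_1 \cup F_2}$ split as the column-block $[\, M_{F_1} \mid M_{F_2}\,]$, in which the two blocks share no indeterminates. Applying rank sub-additivity over column blocks inside every completion yields
\[
\maxRank(M_{F_1 \cup F_2}) \leq \maxRank(M_{F_1}) + \maxRank(M_{F_2}) = 2m - r_1 - r_2,
\]
and a further application to $M = [\, M_{F_1 \cup F_2} \mid M_{\overline{F_1 \cup F_2}}\,]$, bounding $\maxRank(M_{\overline{F_1 \cup F_2}})$ trivially by its column count $n - s_1 - s_2$, produces
\[
\maxRank(M) \leq (2m - r_1 - r_2) + (n - s_1 - s_2).
\]

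Substituting $\maxRank(M) = (m - r_1) + (n - s_1)$ into this inequality collapses immediately to $r_2 + s_2 \leq m$, contradicting the Big zero block inequality $r_2 + s_2 > \max\{m,n\} \geq m$ of the $F_2$-decomposition. The whole argument is short; the main thing to watch for is bookkeeping with the degenerate cases of Table~\ref{tabla1}. One should verify, using the degenerate conventions attached to Definition~\ref{mfr}, that the identity $\maxRank(M_{F_i}) = m - r_i$ and the two applications of rank sub-additivity remain valid when any of $A_i$, $B_i$, $C_i$ become degenerate; in each such case the formulas continue to hold by direct inspection.
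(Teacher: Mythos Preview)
Your argument is correct and considerably shorter than the paper's. The paper proceeds structurally: it performs two successive sweeps from bottom to top (first with respect to $F_1$, then $F_2$) to produce a $4\times 3$ block form $M''$, invokes Lemma~\ref{RLItoFRC} to certify that the resulting zero blocks are Big, and then exhibits a specific submatrix $\begin{smat} D' & E' \\ 0 & E'' \end{smat}$ that is simultaneously FmR (as the complementary block of an $F_1$-decomposition) and not FmR (because its own zero block is forced to be Big). Your route bypasses all of this machinery: you read off the exact value $\maxRank(M)=(m-r_i)+(n-s_i)$ from Theorem~\ref{Mrank+Mrank=Mrank}, apply columnwise rank sub-additivity twice, and collapse everything to the single inequality $r_2+s_2\le m$, which contradicts Bigness directly. (Incidentally, the remark that the two column blocks share no indeterminates is true but unnecessary: the sub-additivity $\maxRank[\,N_1\mid N_2\,]\le \maxRank(N_1)+\maxRank(N_2)$ holds for arbitrary ACI-matrices since it holds completion by completion.)

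What each approach buys: your counting argument is more elementary, and it adapts verbatim to Lemma~\ref{SemifactorSet-intersection} (for wide $M$, Medium gives $r_2+s_2=n>m$, the same contradiction). The paper's longer sweep-and-block argument, on the other hand, is a deliberate warm-up for Theorems~\ref{2semifactor-disjointcase} and~\ref{2non-overlappingCupAndCapFS}, where one must actually \emph{construct} the (semi)decompositions for $F_1\cap F_2$ and $F_1\cup F_2$; there a pure counting bound is not enough, and the structural information produced by the sweeps becomes essential.
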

\begin{proof} 

Suppose  $F_1$ and $F_2$ are two disjoint factor sets of an $m\times n$ ACI-matrix $M$.  As the empty set is not a factor set of any ACI-matrix then, up to permutation of columns, we can assume that 
$F_1=\{1,\ldots,h\}$ and $F_2=\{h+1,\ldots,k\}$     
with $1\leq h < k \leq n$.  And let $U=\{1,\ldots, n\}\setminus (F_1\cup F_2)$. 

First we do a sweep from bottom to top in $M$ with respect to  $F_1$.  After reordering the rows we obtain \bigskip
\begin{align*} 
M' =\left[\begin{array}{c|cc}
\smash{\overset{F_1}{\overbrace{A}}} & \smash{\overset{F_2}{\overbrace{B}}} &  \smash{\overset{U}{\overbrace{C}}} \\ \cline{1-3}
0    & D             &  E 
\end{array}\right] \hspace{-2mm}\begin{array}{rr}
\} \ r \\
\} \ t
\end{array} 
\end{align*}
where  $A$ has linearly independent rows. As $M\sim M'$ and $M'$ is obtained  from $M$ without permuting columns, then $F_1$ and $F_2$ are factor sets of $M'$.

Now we do a sweep from bottom to top in $M'$ with respecto to $F_2$. After reordering the first $r$ rows and the last $t$ rows we   obtain: \bigskip
\begin{align*}
M''=\left[\begin{array}{c|cc}
\smash{\overset{F_1}{\overbrace{A''}}} & \smash{\overset{F_2}{\overbrace{0}}} & \smash{\overset{U}{\overbrace{C''}}} \\
A' & B' & C' \\ \cline{1-3}
0    & D'              &  E'  \\
0    & 0              &  E'' 
\end{array}\right] \hspace{-2mm} 
\begin{array}{rr}
 \}  \ r_1\\
 \}  \ r_2 \\
 \}  \ t_1 \\
 \}  \ t_2
\end{array}  
\end{align*}
with $r_1+r_2=r$,   $t_1+t_2=t$,
 and where  $\begin{smat} B' \\ D' \end{smat}$ has  linearly independent rows. 
As $A\sim \begin{smat} A'' \\ A' \end{smat}$ then also $\begin{smat} A'' \\ A' \end{smat}$ has linear independent rows.
As $M'\sim M''$ and $M''$ is obtained  from $M'$ without permuting columns   then $F_1$ and $F_2$ are factor sets of $M''$. 

Now let us deduce some inequalities that will be key to our analysis.
On one hand we have a zero block corresponding to the factor set $F_1$, it is formed by the two zeros of the first block column of $M''$. As $\begin{smat} A'' \\ A' \end{smat}$ has linear independent rows then this zero block must be Big (see Lemma~\ref{RLItoFRC}). So 
\begin{align} \label{m,n<t_1+t_2+F_1}
 t_1+t_2+\#F_1 > \max\{ m,n \}.
\end{align}
On the other hand we have a zero block corresponding to the factor set $F_2$, it is formed by the two zeros of the second block column of  $M''$. As $\begin{smat} B' \\ D' \end{smat}$ has  linearly independent rows then this zero block must be Big (see Lemma~\ref{RLItoFRC}). So 
\begin{align} \label{m,n<s_2+t_1+F_2}
 r_1+t_2+\#F_2> \max\{ m,n \}.
\end{align}

 Finally, the culprit of the contradiction will be the  
 $(t_1+t_2)\times ( \# F_2+ \# U)$
 ACI-submatrix of $M''$
\begin{align*}
\left[\begin{array}{cc}
D' & E'   \\
 0              &  E'' 
\end{array}\right],
\end{align*}
since we will prove that it is FmR and not FmR at the same time:
\begin{enumerate}
\item $\begin{smat} D' & E' \\ 0 & E'' \end{smat}$ is FmR because  $F_1$ is a factor set of $M''$ and $\begin{smat} A'' \\ A' \end{smat}$ has linearly independent rows    (see Lemma~\ref{RLItoFRC}).  

\item  The zero block of $\begin{smat} D' & E' \\ 0 & E'' \end{smat}$ is Big if $t_2+\#F_2> \max\{\rows \begin{smat} D' & E' \\ 0 & E'' \end{smat}, \cols\begin{smat} D' & E' \\ 0 & E'' \end{smat}\}$. 
Let us see that this inequality is true:
\begin{enumerate}
 \item $t_2+\#F_2>t_1+t_2$. 
 
 From~(\ref{m,n<s_2+t_1+F_2}) it follows that
\begin{align*}
 r_1+t_2+\#F_2 >m=r_1+r_2+t_1+t_2
\end{align*}
which   implies the required inequality.
 
 \item $t_2+\#F_2 > \# F_2+ \# U $. 
 
 From~(\ref{m,n<t_1+t_2+F_1}) it follows that: 
\begin{align} \label{parte1-raz-L41}
t_1+t_2+\#F_1 >m= r_1+r_2+t_1+t_2 \quad \Longrightarrow \quad  
 \#F_1>r_1.
\end{align}
 From~(\ref{m,n<s_2+t_1+F_2}) it follows that: 
\begin{align} \label{parte2-raz-L41}
r_1+t_2+\#F_2>n= \#F_1+\#F_2+\#U.
\end{align}
From~(\ref{parte1-raz-L41}) and (\ref{parte2-raz-L41}) we obtain the required  inequality.
\end{enumerate}
So $\begin{smat} D' & E' \\ 0 & E'' \end{smat}$ is not FmR because  its   zero block  is Big (see Proposition~\ref{Big->nofFullMaxRank}).
\end{enumerate}
\end{proof}


Although the proof of the next Lemma\ref{SemifactorSet-intersection} is very similar to the proof of Lemma~\ref{constant-nonFullRank.2factorSetsCantBeDisjoint} we will include it for clarity because the differences are subtle. 

 \begin{lemma} \label{SemifactorSet-intersection}
Two semifactor sets of  a  wide ACI-matrix
can not be disjoint.
\end{lemma}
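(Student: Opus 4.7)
The plan is to follow the same architecture as the proof of Lemma~\ref{constant-nonFullRank.2factorSetsCantBeDisjoint}: assume for contradiction that $F_{1}=\{1,\ldots,h\}$ and $F_{2}=\{h+1,\ldots,k\}$ are disjoint semifactor sets of an $m\times n$ wide ACI-matrix $M$, let $U=\{1,\ldots,n\}\setminus(F_{1}\cup F_{2})$, and perform a sweep from bottom to top with respect to $F_{1}$ followed by a sweep with respect to $F_{2}$, reordering rows along the way. As in the previous proof this produces an equivalent matrix
\begin{align*}
M''=\left[\begin{array}{c|c|c}
\smash{\overset{F_{1}}{\overbrace{A''}}} & \smash{\overset{F_{2}}{\overbrace{0}}} & \smash{\overset{U}{\overbrace{C''}}}\\
A' & B' & C' \\
0 & D' & E' \\
0 & 0 & E''
\end{array}\right]\hspace{-2mm}\begin{array}{l} \}\,r_{1}\\ \}\,r_{2}\\ \}\,t_{1}\\ \}\,t_{2} \end{array}
\end{align*}
in which $\begin{smat} A''\\ A'\end{smat}$ and $\begin{smat} B'\\ D'\end{smat}$ have linearly independent rows. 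Since the column order has not been modified, $F_{1}$ and $F_{2}$ remain semifactor sets of $M''$.

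The key step is to feed these two linearly-independent-rows conditions into Lemma~\ref{RLItoFRC} for semifactor sets, so that the two candidate zero blocks are in fact Medium. Because $M$ is wide, $\max\{m,n\}=n$, and the Medium conditions give the \emph{equalities}
\begin{align*}
t_{1}+t_{2}+\#F_{1}=n \qquad\text{and}\qquad r_{1}+t_{2}+\#F_{2}=n.
\end{align*}
From the first, $t_{1}+t_{2}=n-\#F_{1}=\#F_{2}+\#U$, so the ACI-submatrix $N=\begin{smat} D' & E'\\ 0 & E''\end{smat}$ is \emph{square} of order $n-\#F_{1}$. Moreover, again by Lemma~\ref{RLItoFRC} applied to the semifactor set $F_{1}$ of $M''$, $N$ is the $C$-block of an $F_{1}$-semidecomposition, hence FCmR; being square this means $N$ is FmR.

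The contradiction will come from exhibiting the obvious $t_{2}\times\#F_{2}$ zero block of $N$ as a Big zero block, contradicting Proposition~\ref{Big->nofFullMaxRank}. The inequality to verify is $t_{2}+\#F_{2}>\rows(N)=n-\#F_{1}$, which is equivalent to $\#F_{1}>r_{1}$ after substituting $t_{2}+\#F_{2}=n-r_{1}$ from the second equality. This is where the wideness hypothesis enters: from $t_{1}+t_{2}+\#F_{1}=n$ and $m=r_{1}+r_{2}+t_{1}+t_{2}<n$ we get $\#F_{1}=n-t_{1}-t_{2}>m-t_{1}-t_{2}=r_{1}+r_{2}\geq r_{1}$, as needed.

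The main obstacle is really just the bookkeeping described above: in the factor-set version the two governing inequalities are already strict, so the inner zero block is automatically Big; here the Medium condition only gives equalities, and one has to be careful to see exactly where the hypothesis $m<n$ is used to recover the strict inequality $\#F_{1}>r_{1}$. Once this is pinned down, the final contradiction (a square FmR matrix possessing a Big zero block) is immediate from Proposition~\ref{Big->nofFullMaxRank}.
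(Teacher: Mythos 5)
Your proof is correct and follows essentially the same route as the paper's: two sweeps to reach $M''$, Lemma~\ref{RLItoFRC} to force the two Medium zero blocks and the FCmR property of $\begin{smat} D' & E'\\ 0 & E''\end{smat}$, and then a Big zero block inside that submatrix contradicting Proposition~\ref{Big->nofFullMaxRank}. Your only (valid) streamlining is to observe from $t_1+t_2+\#F_1=n$ that this submatrix is square, which collapses the paper's two separate inequalities $t_2+\#F_2>t_1+t_2$ and $t_2+\#F_2>\#F_2+\#U$ into the single condition $\#F_1>r_1$, correctly traced back to the wideness hypothesis $m<n$.
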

\begin{proof}

 Suppose  $F_1$ and $F_2$ are two disjoint semifactor sets of a wide  ACI-matrix $M$ of size $m\times n$.   The first half of the proof of Lemma~\ref{constant-nonFullRank.2factorSetsCantBeDisjoint} is the same as this one with the difference  that in this case we have two semifactor sets and so the zero blocks that will appear will be Medium instead of Big.
So, after the two sweeps from bottom to top in $M$ and reordering the rows we   obtain:

 \begin{align*}
M''=\left[\begin{array}{c|cc}
\smash{\overset{F_1}{\overbrace{A''}}} & \smash{\overset{F_2}{\overbrace{0}}} & \smash{\overset{U}{\overbrace{C''}}} \\
A' & B' & C' \\ \cline{1-3}
0    & D'              &  E'  \\
0    & 0              &  E'' 
\end{array}\right] \hspace{-2mm} 
\begin{array}{rr}
 \}  \ r_1\\
 \}  \ r_2 \\
 \}  \ t_1 \\
 \}  \ t_2
\end{array}  
\end{align*}

Now let us deduce some equalities that will be key to our analysis.
On one hand we have a zero block corresponding to the semifactor set $F_1$, it is formed by the two zeros of the first block column of $M''$. As $\begin{smat} A'' \\ A' \end{smat}$ has linear independent rows then this zero block must be Medium (see Lemma~\ref{RLItoFRC}). So 
\begin{align} \label{m,n=t_1+t_2+F_1}
 t_1+t_2+\#F_1 = \max\{ m,n \}=n.
\end{align}
On the other hand we have a zero block corresponding to the semifactor set $F_2$, it is formed by the two zeros of the second block column of  $M''$. As $\begin{smat} B' \\ D' \end{smat}$ has  linearly independent rows then this zero block must be Medium (see Lemma~\ref{RLItoFRC}). So 
\begin{align} \label{m,n=s_2+t_1+F_2}
 r_1+t_2+\#F_2= \max\{ m,n \}=n.
\end{align}

Again the culprit of the contradiction will be the  
 $(t_1+t_2)\times ( \# F_2+ \# U)$
 ACI-submatrix of $M''$
\begin{align*}
\left[\begin{array}{cc}
D' & E'   \\
 0              &  E'' 
\end{array}\right], 
\end{align*}
since we will prove that it is FmR and not FmR at the same time:
\begin{enumerate}

\item $\begin{smat} D' & E' \\ 0 & E'' \end{smat}$ is FmR because  $F_1$ is a semifactor set of $M''$ and $\begin{smat} A'' \\ A' \end{smat}$ has linearly independent rows    (see Lemma~\ref{RLItoFRC}).  

\item  The zero block of $\begin{smat} D' & E' \\ 0 & E'' \end{smat}$ is Big if $t_2+\#F_2> \max\{\rows \begin{smat} D' & E' \\ 0 & E'' \end{smat}, \cols\begin{smat} D' & E' \\ 0 & E'' \end{smat}\}$.
Let us see that this inequality is true:
\begin{enumerate}
 \item $t_2+\#F_2>t_1+t_2$. 
 
 From~(\ref{m,n=s_2+t_1+F_2}) it follows that
\begin{align*}
 r_1+t_2+\#F_2 =n>m=r_1+r_2+t_1+t_2
\end{align*}
which   implies the required inequality.
 
 \item $t_2+\#F_2 > \# F_2+ \# U $. 
 
 From~(\ref{m,n=t_1+t_2+F_1}) it follows that: 
\begin{align} \label{parte1-raz-L42}
t_1+t_2+\#F_1 =n>m= r_1+r_2+t_1+t_2 \quad \Longrightarrow \quad  
 \#F_1>r_1.
\end{align}
 From~(\ref{m,n=s_2+t_1+F_2}) it follows that: 
\begin{align} \label{parte2-raz-L42}
r_1+t_2+\#F_2=n= \#F_1+\#F_2+\#U.
\end{align}
From~(\ref{parte1-raz-L42}) and (\ref{parte2-raz-L42}) we obtain the required  inequality.
\end{enumerate}
So $\begin{smat} D' & E' \\ 0 & E'' \end{smat}$ is not FmR because  its   zero block  is Big (see Proposition~\ref{Big->nofFullMaxRank}).
\end{enumerate}
\end{proof}


 \begin{lemma} \label{SemifactorsInTall}
Two semifactor sets of  a tall or square   ACI-matrix  can be disjoint or not  disjoint.
\end{lemma}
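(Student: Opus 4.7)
The plan is to establish this ``can be either way'' claim by exhibiting two small tall or square FmR ACI-matrices: one with a pair of disjoint semifactor sets, and one with a pair of semifactor sets that share a column. This stands in sharp contrast to Lemmas~\ref{constant-nonFullRank.2factorSetsCantBeDisjoint} and~\ref{SemifactorSet-intersection}, which ruled out disjointness in their respective settings, and since the statement is existential there is nothing to argue beyond producing explicit witnesses.

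For the disjoint case, I would take the tall FCmR matrix $M = \begin{smat} 1 \\ 0 \end{smat}$. The empty set is a semifactor set by Case 3 of Table~\ref{tabla2}: $M$ itself is FCmR and can be taken as the $C$ block, with $A$ void and a $2\times 0$ tall degenerate zero block (Medium since $2+0 = 2 = \max\{2,1\}$). The singleton $F = \{1\}$ is also a semifactor set by Case 4: take $R = I$, $A = [1]$, $C$ tall degenerate, and a $1\times 1$ zero block, which is Medium since $1+1 = 2 = \max\{2,1\}$. The two semifactor sets $\emptyset$ and $\{1\}$ are then trivially disjoint.

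For the non-disjoint case, I would take the square FmR matrix $M = I_2$. The singleton $F_1 = \{1\}$ is a semifactor set via Case 1 of Table~\ref{tabla2}, with $A = C = [1]$ and a $1\times 1$ zero block, Medium because $1+1 = 2 = \max\{2,2\}$. The set $F_2 = \{1,2\}$ is a semifactor set via Case 5, with $A = I_2$, $C$ void, and a $0\times 2$ wide degenerate zero block, Medium because $0+2 = 2 = \max\{2,2\}$. Since both $F_1$ and $F_2$ contain the index $1$, they are not disjoint.

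The only routine point to verify in each construction is that the zero block satisfies the Medium condition $r + s = \max\{m,n\}$ and that the degenerate blocks involved fall into the admissible cases of Table~\ref{tabla2}; both checks are immediate from the dimensions. There is no substantive obstacle here: unlike the preceding two lemmas, which required a careful double sweep and a contradiction argument, this one is settled simply by writing down the two witnesses above.
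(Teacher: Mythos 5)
Your proof is correct and follows essentially the same strategy as the paper: the statement is purely existential, and both of your witnesses check out against Definition~\ref{semifactorSetDefinition} and the cases of Table~\ref{tabla2}. Two cosmetic differences worth noting: your disjoint pair uses the empty semifactor set (legitimate for tall/square FmR matrices, but a more degenerate witness than the paper's two nonempty disjoint sets $\{1\}$ and $\{2\}$ in a $4\times 3$ example), and you exhibit only a tall disjoint pair and a square overlapping pair while the paper covers all four shape/disjointness combinations --- both points are trivially repaired, e.g.\ $\emptyset$ and $\{1\}$ are also disjoint semifactor sets of $I_2$, and $\{1\}$ and $\{1,2\}$ overlap in $\begin{smat}1&0\\0&1\\0&0\end{smat}$.
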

\begin{proof}
We provide an example for each case. For tall ACI-matrices 
$$
\begin{array}{lcl}  \vspace{-2mm}
\ \  \overset{F_1}{\overbrace{\phantom{x}}} \ \ \overset{F_2}{\overbrace{\phantom{x}}}  & & 
\ \ \ \overset{F_1}{\overbrace{\phantom{xxxxxx}}} \\ \vspace{-3mm}
\begin{bmatrix}
\ \ 1 \ \ & \ \ 0 \ \ & \ \ x \ \ \\
\ \ 0 \ \ & \ \ 1 \ \ & \ \ y \ \ \\
\ \ 0 \ \ & \ \ 0 \ \ & \ \ 1 \ \ \\
\ \ 0 \ \ & \ \ 0 \ \ & \ \ 1 \ \ \\
\end{bmatrix} 
& \text{and} & 
\begin{bmatrix}
\ \ 1 \ \ & \ \ 0 \ \ & \ \ 0 \ \ \\
\ \ x \ \ & \ \ 1 \ \ & \ \ y \ \ \\
\ \ 0 \ \ & \ \ 0 \ \ & \ \ 1 \ \ \\
\ \ 0 \ \ & \ \ 0 \ \ & \ \ 0 \ \ \\
\end{bmatrix} \\ 
& & 
\phantom{xxxxxx}  \underset{F_2}{\underbrace{\phantom{xxxxxx}}}
\end{array}
$$
and for square ACI-matrices it is enough to delete the last row on each one. 
\end{proof}

%

%

In what follows our main objective will be to prove that the intersection and the union of two factor (resp. semifactor) sets  is a factor (resp. semifactor) set. That is what Theorem~\ref{cor6} below says. Note that Lemmas~\ref{constant-nonFullRank.2factorSetsCantBeDisjoint},~\ref{SemifactorSet-intersection} and~\ref{SemifactorsInTall} conclude that two factor or two semifactor sets always overlap, except in the case of some semifactor sets of tall or square ACI-matrices. So in order to achieve our objective we will first study this exceptional case of disjoint semifactor sets (Theorem~\ref{2semifactor-disjointcase}), and then the generic case of overlapping factor or semifactor sets  (Theorem~\ref{2non-overlappingCupAndCapFS}).

%

\begin{theorem} \label{2semifactor-disjointcase}
The intersection and the union of two disjoint semifactor sets of a tall or square ACI-matrix are semifactor sets. 
\end{theorem}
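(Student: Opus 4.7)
The plan is to dispatch the intersection case immediately, and then to attack the union by refining the double-sweep construction used in the proof of Lemma~\ref{SemifactorSet-intersection}.

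For $F_1\cap F_2=\emptyset$: the existence of any semifactor set forces $M$ to be FmR by Proposition~\ref{maxRank<->noFactorSet}(ii); combined with the hypothesis that $M$ is tall or square this gives that $M$ is FCmR, so by Case~3 of Table~\ref{tabla2} the empty set is a semifactor set of $M$.

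For $F_1\cup F_2$ I would re-run the double sweep: after a sweep from bottom to top in $M$ with respect to $F_1$, followed by a sweep with respect to $F_2$ (treating each resulting row group separately) and an appropriate row reorder, the matrix $M$ becomes equivalent to
\begin{equation*}
M'' = \begin{bmatrix} A'' & 0 & C'' \\ A' & B' & C' \\ 0 & D' & E' \\ 0 & 0 & E'' \end{bmatrix}
\end{equation*}
with column blocks indexed by $F_1,F_2,U$, row blocks of sizes $r_1,r_2,t_1,t_2$, and with $\begin{smat} A'' \\ A' \end{smat}$ and $\begin{smat} B' \\ D' \end{smat}$ having linearly independent rows. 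Since $F_1$ and $F_2$ remain semifactor sets of $M''$, Lemma~\ref{RLItoFRC} forces both visible zero blocks to be Medium, yielding
\begin{equation*}
t_1+t_2+\#F_1 = m \quad\text{and}\quad r_1+t_2+\#F_2 = m
\end{equation*}
(here $\max\{m,n\}=m$ because $M$ is tall or square), which combined with $m=r_1+r_2+t_1+t_2$ give $\#F_1=r_1+r_2$ and $\#F_2=r_2+t_1$.

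The pivotal step---and the main obstacle---is to show that the middle row block vanishes, i.e.\ $r_2=0$. After grouping the columns of $F_1\cup F_2$ to the left, the last $t_2$ rows of $M''$ produce a zero block of size $t_2\times(\#F_1+\#F_2)$ whose measure equals $m+r_2$; if $r_2>0$ this block would be Big, and Proposition~\ref{Big->nofFullMaxRank} would contradict the fact that $M''$ is FmR. So $r_2=0$ and the row $(A'\ B'\ C')$ disappears; the verification of the three conditions of Definition~\ref{semifactorSetDefinition} for the resulting candidate is then routine. Mediumness of the bottom-left zero block is the identity $m=r_1+t_1+t_2$. Lemma~\ref{RLItoFRC} applied to the $F_1$- and $F_2$-semidecompositions of $M''$ makes $A''$ and $D'$ individually FRmR, and Proposition~\ref{FRmR-FCmR}(3) glues them into $\begin{smat} A'' & 0 \\ 0 & D' \end{smat}$ FRmR. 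Finally, the same Lemma gives $\begin{smat} A'' & C'' \\ 0 & E'' \end{smat}$ FCmR; its inner zero block is automatically Medium (the inequality $t_2\geq\#U$ follows from $m\geq n$ once $r_2=0$), so Theorem~\ref{Mrank+Mrank=Mrank} forces $E''$ to be FCmR.
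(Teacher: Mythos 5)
Your proof is correct and follows essentially the same route as the paper's: the intersection is handled by observing that the empty set is a semifactor set of a tall or square FmR matrix, and the union is handled by the double sweep, showing both visible zero blocks are Medium, forcing $r_2=0$ because otherwise the bottom $t_2\times(\#F_1+\#F_2)$ zero block would be Big, and then verifying the FRmR/FCmR conditions on the diagonal blocks. Your arithmetic shortcut (measure $=m+r_2$) and your use of Theorem~\ref{Mrank+Mrank=Mrank} to extract that $E''$ is FCmR are only cosmetic variations on the paper's argument.
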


\begin{proof} 
Let $F_1$ and $F_2$ be two disjoint semifactor sets of  a tall or square  ACI-matrix $M$ of size $m\times n$. 

Tall and square FmR ACI-matrices are the only ACI-matrices for which the empty set is a semifactor set (see Table~\ref{tabla2}). Then  $F_1\cap F_2=\emptyset$ is a semifactor set of $M$. 
 
The proof for $F_1\cup F_2$ starts again like the proof of Lemma~\ref{constant-nonFullRank.2factorSetsCantBeDisjoint}.  So after the two sweeps from bottom to top in $M$ and reordering the rows we   obtain:

\begin{align} \label{MatrixWellOrderedQWZ11}
M''=\left[\begin{array}{c|cc}
\smash{\overset{F_1}{\overbrace{A''}}} & \smash{\overset{F_2}{\overbrace{0}}} & C'' \\
A' & B' & C' \\ \cline{1-3}
0    & D'              &  E'  \\
0    & 0              &  E'' 
\end{array}\right] \hspace{-2mm} 
\begin{array}{rr}
 \}  \ r_1\\
 \}  \ r_2 \\
 \}  \ t_1 \\
 \}  \ t_2
\end{array} 
\end{align}
Focusing on the $F_2$ semifactor set, by Proposition~\ref{corBig}  we know that 
$\begin{smat} B' \\ D' \end{smat}$  is square and  so \begin{align} \label{t1<F2}
 \#F_2 \geq t_1.
\end{align}
Focusing on the $F_1$ semifactor set, the zero block formed by the two zeros of the first  column of $M''$ is Medium. So according to Definition~\ref{defBigZero}: 
\begin{align} \label{m,n<t_1+t_2+F_1-2}
 t_1+t_2+\#F_1=\max\{ m,n \}= m.
\end{align}
Let $Z$ be   the zero block composed by the two zero blocks of the last  row of  $M''$. From (\ref{t1<F2}) and (\ref{m,n<t_1+t_2+F_1-2}) 
\begin{align} \label{maxM,N<t_2+F_1+F_2}
t_2+\#F_1+\#F_2\geq \max\{ m,n \}= m
\end{align}
and so $Z$ is either a Big  (if  inequality is strict) or a Medium  (if there is equality) zero block in $M''$.
It can not be Big, otherwise $M''$ would not be FmR (Proposition~\ref{Big->nofFullMaxRank}) and this contradicts  that an ACI-matrix has a semifactor set if and only if it is  FmR (Proposition~\ref{maxRank<->noFactorSet}). 
So $Z$ is a Medium zero block in $M''$. So now we know that there must be equality in~(\ref{maxM,N<t_2+F_1+F_2}), which in turn means that there is equality in~(\ref{t1<F2}): $\#F_2 = t_1$. So $D'$ is square, which implies that $B'$ is wide degenerate and therefore $r_2=0$. So~(\ref{MatrixWellOrderedQWZ11}) is simplified into:

\begin{align*}
M''=\left[\begin{array}{c|cc}
\smash{\overset{F_1}{\overbrace{A''}}} & \smash{\overset{F_2}{\overbrace{0}}} & C'' \\ \cline{1-3}
0    & D'              &  E'  \\
0    & 0             &  E'' 
\end{array}\right] \hspace{-2mm} 
\begin{array}{rr}
 \}  \ r_1\\
 \}  \ t_1 \\
 \}  \ t_2
\end{array} . \\[-3.5mm] \nonumber
\end{align*}
To prove that $F_1\cup F_2$ is a semifactor set, apart from $Z$ being a Medium zero block, we still need to prove that:
\begin{enumerate}[i.]
\item $\begin{smat} A'' & 0 \\ 0 & D' \end{smat}$ is FRmR. Since $F_1$ is a semifactor set we know that $\begin{smat} A'' \\ A' \end{smat}= \begin{smat}A''\end{smat}$ is FRmR, and since $F_2$ is a semifactor set we know that $\begin{smat} B' \\ D' \end{smat}= \begin{smat}D'\end{smat}$ is FRmR. Now we apply Proposition~\ref{FRmR-FCmR} to deduce this item.
\item $E''$ is FCmR. Since $F_1$ is a semifactor set we know that $\begin{smat} D' & E' \\ 0 & E'' \end{smat}$ is FCmR. This together with $D'$ being square implies that $E''$ is FCmR.
\end{enumerate}
\end{proof}

For the proof of our next theorem it will be convenient to introduce the notion of complementary of an ACI-submatrix. Let $A$ be an ACI-submatrix  of  an ACI-matrix $M$,   the \textbf{complementary} $\overline{A}$ of $A$ in $M$ is obtained by deleting all the rows and columns of $M$ that are involved in $A$.

\begin{theorem} \label{2non-overlappingCupAndCapFS}
The intersection and the   union of  two overlapping factor (resp. semifactor) sets of an ACI-matrix are  factor  (resp. semifactor) sets.
\end{theorem}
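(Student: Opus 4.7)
The plan is to carry out two successive sweeps from bottom to top---first with respect to $F_1$, then with respect to $F_2$---and then read off the $(F_1\cap F_2)$- and $(F_1\cup F_2)$-decompositions from the resulting block structure. Set $I = F_1\cap F_2$, $J = F_1\setminus F_2$, $K = F_2\setminus F_1$, $L = \overline{F_1\cup F_2}$, with $I\neq\emptyset$ by the overlap hypothesis.

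A sweep with respect to $F_1$, followed by a row reordering, produces an $F_1$-decomposition (Lemma~\ref{RLItoFRC}) whose top block has linearly independent rows in columns $I\cup J$. Since no column permutation has occurred, $F_2$ remains a factor (resp.\ semifactor) set of this new matrix, so a second sweep, with respect to $F_2$, can be applied. Two observations control what this second sweep does: a sweep only adds rows below to rows above, so the zeros already present in the bottom row-block (in columns $I\cup J$) are preserved; and because those bottom rows are zero in $I$, adding them to a top row does not alter the top row's $I$-entries. After the second sweep and a further reordering within the top and the bottom blocks, another invocation of Lemma~\ref{RLItoFRC} yields a block form
$$
M'' = \begin{bmatrix}
\alpha_{11} & \alpha_{12} & 0 & \beta_{1} \\
\alpha_{21} & \alpha_{22} & \gamma & \beta_{2} \\
0 & 0 & \delta & \epsilon_{1} \\
0 & 0 & 0 & \epsilon_{2}
\end{bmatrix},
$$
in which the first two row-blocks witness the $F_1$-decomposition and the first and third row-blocks (after placing columns $I\cup K$ first) witness the $F_2$-decomposition.

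From $M''$ one reads off the two candidate decompositions. For $F_1\cup F_2 = I\cup J\cup K$, the last row-block gives a zero block of size $t_2\times(|I|+|J|+|K|)$; its $A$-part is the upper-left $3\times 3$ block of $M''$, which is FRmR because the two block-triangular splittings of $M''$ (into the $F_1$- and $F_2$-FRmR blocks) let Proposition~\ref{FRmR-FCmR} collate its FRmR diagonal subblocks, while its $C$-part is $\epsilon_{2}$, which is FCmR as the bottom of the FCmR block of the $F_1$-decomposition. For $F_1\cap F_2 = I$, the zero block is the lower three row-blocks in column-group $I$ (size $(r_2+t_1+t_2)\times|I|$), and a symmetric argument (using the $F_2$-decomposition to place $\alpha_{11}$ inside an FRmR block and the $F_1$-decomposition to place the lower-right $3\times 3$ block inside an FCmR block) delivers the corresponding $A$- and $C$-parts with their FRmR/FCmR status. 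The required Big (resp.\ Medium) property of both new zero blocks then follows from combining the Big/Medium inequalities for $F_1$ and $F_2$, in the spirit of the size computations already performed in Lemma~\ref{constant-nonFullRank.2factorSetsCantBeDisjoint} and Theorem~\ref{2semifactor-disjointcase}.

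The main obstacle is pinning down the block form of $M''$ rigorously: one has to show that the sequence of sweeps really produces a matrix in which the FRmR blocks of the $F_1$- and $F_2$-decompositions fit together exactly as depicted, so that the $A$-submatrices for $F_1\cap F_2$ and $F_1\cup F_2$ inherit FRmR from them (and symmetrically for their $C$-submatrices). A secondary nuisance is handling the factor and semifactor cases uniformly, and absorbing the many degenerate configurations (some of $J$, $K$, $L$, $r_1$, $r_2$, $t_1$, $t_2$ can vanish) into the degenerate and void blocks allowed in Tables~\ref{tabla1} and~\ref{tabla2}.
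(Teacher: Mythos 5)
Your strategy is the same as the paper's: two successive sweeps, Lemma~\ref{RLItoFRC} to certify the resulting zero blocks, and Proposition~\ref{FRmR-FCmR} to assemble the diagonal blocks of the new decompositions. But the proposal stops exactly where the real work begins, and what you defer as ``the main obstacle'' is not a formality to be pinned down later --- it is the central idea of the proof, and without it the argument does not close. Writing the paper's form $M''=\begin{smat} A'' & 0 & 0 & D'' \\ A' & B' & C' & D' \\ 0 & 0 & E' & F' \\ 0 & 0 & 0 & F''\end{smat}$ over the column groups $F_1\setminus F_2$, $F_1\cap F_2$, $F_2\setminus F_1$, rest (note your displayed $M''$ is already off: the first row-block must vanish on \emph{all} of $F_2$'s columns, including those of $F_1\cap F_2$, since the second sweep zeroes the full $F_2$-part of those rows --- your $\alpha_{12}$ should be $0$), the missing step is to prove that the corner blocks $A''$ and $E'$ are \emph{square} FmR. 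Each is FRmR as a piece of the $A$-part of one decomposition and FCmR as a piece of the $C$-part of the other, whence square. This squareness is indispensable twice over. First, $B'$ is FRmR only because $\begin{smat}A''&0\\A'&B'\end{smat}$ is FRmR \emph{and} $A''$ is square, and $F''$ is FCmR only because $\begin{smat}E'&F'\\0&F''\end{smat}$ is FCmR \emph{and} $E'$ is square; without these the diagonal blocks of your candidate decompositions are not certified. Second, the Big/Medium property does \emph{not} follow by ``combining the inequalities'' for $F_1$ and $F_2$: the intersection block has measure $r_1+t_1+t_2+\#(F_1\cap F_2)$ while the $F_1$ block has measure $t_1+t_2+\#F_1$, and these coincide precisely because $r_1=\#(F_1\setminus F_2)$, i.e.\ because $A''$ is square (similarly the union block needs $E'$ square). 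Adding the two hypothesised inequalities only yields the union bound in terms of the intersection bound and vice versa, so the count is circular without the squareness input.

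A second substantive omission is that the boundary configurations cannot be ``absorbed into degenerate blocks.'' One must show that $t_1=0$ and $r_1=0$ are impossible (each forces a column of zeros inside a submatrix that Lemma~\ref{RLItoFRC} certifies as FCmR), that $t_2=0$ is impossible when $F_1,F_2$ are factor sets (it makes $E'$ square inside a block that Proposition~\ref{corBig} forces to be tall), and that $t_2=0$ for semifactor sets requires a genuinely separate argument in which the last column group disappears and $F_1\cup F_2$ is recognized as the full column set of a wide or square FmR matrix. These are contradiction arguments and a distinct subcase, not bookkeeping, and the proposal as written gives no indication of how they would be handled.
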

\begin{proof} Let $F_1$ and $F_2$ be two overlapping factor (resp. semifactor) sets of an ACI-matrix $M$.

If $F_1 \subset F_2$ or $F_2 \subset F_1$  the result is trivial. So, without loss of generality we can assume that 
$F_1=\{1,\ldots,k\}$ and $F_2=\{h+1,\ldots,l\}$
with $1\leq h < k < l$. We do a sweep from bottom to top in $M$ with respect to  $F_1$ and after reordering the rows we obtain 
\begin{align} \label{M'}
& \phantom{XXXx} \overset{F_1}{\overbrace{\phantom{XXXx}}} \nonumber \\[-5mm]
& 
M' =\left[\begin{array}{cc|cc}
{A} & B &   C & D\\ \cline{1-4}
0    & 0              &  E & F
\end{array}\right] \hspace{-2mm}\begin{array}{rr}
\} \ r \\
\} \ t
\end{array} \\[-5mm]
 & \phantom{XXXXXl} \underset{F_2}{\underbrace{\phantom{XXXx}}} \nonumber 
\end{align}
where  $\begin{smat} A  & B \end{smat}$ has linearly independent rows and $\begin{smat} E &  F \end{smat} $ is FCmR (see Lemma~\ref{RLItoFRC}).
Now we do a sweep from bottom to top in $M'$ with respect to $F_2$ and after reordering the first $r$ rows and the last $t$ rows we   obtain
\begin{align*} 
& \phantom{XXXX} \overset{F_1}{\overbrace{\phantom{XXXX}}} \nonumber \\[-5mm]
& 
M''=\left[\begin{array}{cc|cc}
A'' & 0 & 0 & D'' \\
A' & B' & C' & D' \\ \cline{1-4}
0    & 0              &  E' & F' \\
0    & 0              &  0 & F''
\end{array}\right] \hspace{-2mm} \begin{array}{rr}
 \}  \ r_1\\
 \}  \ r_2 \\
 \}  \ t_1 \\
 \}  \ t_2
\end{array} \\[-5mm] 
 & \phantom{XXXXXXl} \underset{F_2}{\underbrace{\phantom{XXXx}}} \nonumber 
\end{align*}
 with $r_1+r_2=r$, $t_1+t_2=t$, $r_1, r_2, t_1, t_2\geq 0$, and where 
$\begin{smat}
A'' & 0 \\
A'    & B'
\end{smat}$ and $\begin{smat}
B' & C' \\
0    & E'
\end{smat}$ 
have  linearly independent rows.  Note that the fourth column of $M''$ could be tall degenerate with size $(r_1+r_2+t_1+t_2)\times 0$ while the other three columns will never be tall degenerate since $1\leq h < k < l$.  Let us see that $t_1=0$ is impossible: if this was the case then the third row of $M''$ would be wide degenerate and since $F_1$ is a factor set then $\begin{smat} 0 & F'' \end{smat}$ should be FCmR, but this is impossible since it has columns full of zeros. So, from now on $t_1>0$. The value $r_2$ might be positive or zero and the arguments we will provide hold for both.

Four possibilities appear depending on the values of $r_1$ and $t_2$. 

\begin{itemize}
\item Suppose $r_1>0$ and $t_2>0$.

\begin{enumerate}[i.]

\item \label{iStep} Since $F_2$ is a factor (resp.  semifactor) set  and $\begin{smat}
B' & C' \\
0    & E
\end{smat}$ 
has  linearly independent rows, then  (see Lemma~\ref{RLItoFRC}) its complementary $
\begin{smat}
A'' & D'' \\ 0 & F''
\end{smat}$  
in $M''$  is FCmR. In particular $A''$ is FCmR.

\item \label{ABstep} Since $F_1$ is a factor (resp. semifactor) set and $\begin{smat} A'' & 0 \\  A' & B' \end{smat}$ has linear independent rows, then (see Lemma~\ref{RLItoFRC})  $\begin{smat} A'' & 0 \\  A' & B' \end{smat}$ is FRmR. In particular $A''$ is FRmR.

\item \label{iiiStep} Since $A''$ is FCmR and FRmR at the same time then  $A''$ is a square FmR.

\item \label{4Step} Since $F_1$ is a factor (resp. semifactor) set and $\begin{smat} A'' & 0 \\  A' & B' \end{smat}$ has linear independent rows, then (see Lemma~\ref{RLItoFRC}) 
its complementary $\begin{smat} E' & F' \\ 0 & F''  \end{smat}$ in $M''$ is FCmR.
 In particular $E'$ is FCmR.

\item  \label{5Step} Since  $F_2$ is a factor (resp. semifactor) set and 
$\begin{smat} B' & C' \\ 0    & E' \end{smat}$ 
has linear independent rows then (see Lemma~\ref{RLItoFRC})
$\begin{smat} B' & C' \\ 0    & E' \end{smat}$ 
is FRmR.  In particular $E'$ is FRmR.

\item \label{viStep} Since $E'$ is FCmR and FRmR at the same time then $E'$ is a square FmR.

\item \label{viiStep} Since $\begin{smat}
E' & F' \\ 
0 & F'' 
 \end{smat}$ is FCmR and $E'$ is square   then $F''$ is FCmR.

\item \label{ivStep} Since $\begin{smat} A'' & 0 \\  A' & B'  \end{smat}$ 
is FRmR and $A''$ is square   then $B'$ is FRmR.

\item \label{xiStep} The complementary matrix of $F''$ in $M''$ is
$\overline{F''}=
\begin{smat}
A'' & 0 & 0 \\
A' & B' & C' \\ 
0    & 0              &  E' 
\end{smat}\sim 
\begin{smat}
B' & A' & C' \\  
0 & A'' & 0 \\ 
0    & 0              &  E'
\end{smat}$. 
Since $B'$, $A''$ and $E'$ are FRmR (\ref{ivStep},  \ref{ABstep} and  \ref{5Step}) then Proposition~\ref{FRmR-FCmR} implies that $\overline{F''}$ is also FRmR.

\item \label{viiiStep} The complementary matrix of $B'$ in $M''$ is
$
\overline{B'}=\begin{smat}
A'' & 0 & D'' \\ 
0 & E' & F' \\ 
0 & 0 & F'' 
\end{smat}$.
Since $A''$,  $E'$ and $F''$  are  FCmR (\ref{iStep},  \ref{4Step} and  \ref{viiStep})  then  Proposition~\ref{FRmR-FCmR} implies that $\overline{B'}$ is FCmR.

  \item \label{ixStep} Consider  the zero block $Z$ obtained by
joining together the three zero blocks in the second column of $M''$, and consider the zero block $Z_1$ corresponding to the factor (resp.  semifactor) set $F_1$.
Note that $Z$ has size $(r_1+t_1+t_2)\times \#(F_1\cap F_2)$, and that $Z_1$   has size $(t_1+t_2)\times \#F_1$. The number $\rows+\cols$ for $Z$ and for $Z_1$ is equal since $A''$ is square~(\ref{iiiStep}).  As $Z_1$ is Big (resp.  Medium) by hypothesis and  the number $\rows+\cols$ is what determines if a zero block is Big (resp.  Medium), then $Z$ is Big (resp.  Medium).

 \item  \label{xiiStep}  Consider  the zero block $Z'$ obtained by 
joining together the three zero blocks in the last row of $M''$, and consider again the zero block $Z_1$ corresponding to the factor (resp.  semifactor) set $F_1$. 
Note that $Z'$ has size $t_2 \times (\#F_1 + \#(F_2\setminus F_1))$ and that $Z_1$ has size $(t_1+t_2)\times \#F_1$. The number $\rows+\cols$ for $Z'$ and $Z_1$ is equal since $E'$ is  square~(\ref{viStep}).  As $Z_1$ is Big (resp.  Medium) by hypothesis  then $Z'$ is Big (resp.  Medium).

\item  $F_1 \cap F_2$ is a factor (resp.  semifactor) set of $M''$ since $Z$ is a Big (resp.  Medium) zero block (\ref{ixStep}), $B'$ is FRmR (\ref{ivStep}) and $\overline{B'}$ is FCmR (\ref{viiiStep}).

\item $F_1 \cup F_2$ is a factor (resp.  semifactor) set of $M''$  since $Z'$ is a Big (resp.  Medium) zero block (\ref{xiiStep}), $F''$ is FCmR (\ref{viiStep}) and  $\overline{F''}$ is FRmR (\ref{xiStep}).
\end{enumerate}

\item Suppose $r_1>0$ and $t_2=0$. Then 
\begin{align*} 
& \phantom{XXXX} \overset{F_1}{\overbrace{\phantom{XXXX}}} \nonumber \\[-5mm]
& 
M''=\left[\begin{array}{cc|cc}
A'' & 0 & 0 & D'' \\
A' & B' & C' & D' \\ \cline{1-4}
0    & 0              &  E & F \\
\end{array}\right] \hspace{-2mm} \begin{array}{l}
 \}  \ r_1\\
 \}  \ r_2 \\
 \}  \ t=t_1 \\
\end{array} \\[-5mm]
 & \phantom{XXXXXXl} \underset{F_2}{\underbrace{\phantom{XXXx}}} \nonumber 
\end{align*}
\begin{enumerate}[i.]

\item \label{1Step2} Since $F_2$ is a factor  (resp.  semifactor)  set  and $\begin{smat}
B' & C' \\
0    & E
\end{smat}$ 
has  linearly independent rows, then  (see Lemma~\ref{RLItoFRC}) its complementary $
\begin{smat}
A'' & D'' 
\end{smat}$ in $M''$   
 is FCmR. In particular $A''$ is FCmR.

\item \label{2Step2} Since $F_1$ is a factor  (resp.  semifactor)  set and $\begin{smat} A'' & 0 \\  A' & B' \end{smat}$ has linear independent rows, then (see Lemma~\ref{RLItoFRC})  $\begin{smat} A'' & 0 \\  A' & B' \end{smat}$ is FRmR. In particular $A''$ is FRmR.

\item \label{3Step2} Since $A''$ is FCmR and FRmR at the same time then  $A''$ is a square FmR.

\item \label{4Step2} Since $F_1$ is a factor  (resp.  semifactor) set and $\begin{smat} A'' & 0 \\  A' & B' \end{smat}$ has linear independent rows, then (see Lemma~\ref{RLItoFRC}) 
its complementary $\begin{smat} E & F  \end{smat}$ in $M''$  is FCmR.
 In particular $E$ is FCmR.

\item \label{5Step2} Since  $F_2$ is a factor  (resp.  semifactor) set and 
$\begin{smat} B' & C' \\ 0    & E \end{smat}$ 
has linear independent rows then (see Lemma~\ref{RLItoFRC})
$\begin{smat} B' & C' \\ 0    & E \end{smat}$ 
is FRmR.  In particular $E$ is FRmR.

\item \label{6Step2} Since $E$ is FCmR and FRmR at the same time then $E$ is a square FmR.

\item \label{7Step2} Since $\begin{smat} A'' & 0 \\  A' & B'  \end{smat}$ 
is FRmR and $A''$ is square FmR  then $B'$ is FRmR.

\item \label{8Step2}  In this step the arguments diverge significantly depending on $F_1, F_2$ being factor or semifactor sets, so we consider the cases separately:
\begin{enumerate}[a.]
\item  $F_1$ and $F_2$ are factor sets.

Since $F_1$ is a factor set then the two zero blocks on the last row of $M''$  compose a Big zero block. This implies (see Proposition~\ref{corBig})  that $\begin{smat} E & F \end{smat}$ is  tall, which is imposible since $E$ is square (\ref{6Step2}).

\item  $F_1$ and $F_2$ are semifactor sets. 

Since $F_1$ is a semifactor set then the two zero blocks on the last row of $M''$ compose a Medium zero block. This implies (see Proposition~\ref{corBig}) that $\begin{smat} E & F \end{smat}$ is tall or square, and since $E$
is square (\ref{6Step2}) this forces $F$ to be tall degenerate with size $t_1\times 0$. So  
\begin{align*} 
& \phantom{XXXX} \overset{F_1}{\overbrace{\phantom{XXXX}}} \nonumber \\[-5mm]
& 
M''=\left[\begin{array}{cc|cc}
A'' & 0 & 0  \\
A' & B' & C'  \\ \cline{1-4}
0    & 0              &  E  \\
\end{array}\right] \hspace{-2mm} \begin{array}{ll}
 \}  \ r_1\\
 \}  \ r_2 \\
 \}  \ t=t_1 \\
\end{array} \\[-5mm]
 & \phantom{XXXXXXl} \underset{F_2}{\underbrace{\phantom{XXXx}}} \nonumber 
\end{align*}
\begin{itemize}

\item In $M''$ the complementary  of $B'$  is
$
\overline{B'}=\begin{smat}
A'' & 0  \\ 
0 & E 
\end{smat}$.
Since $A''$ and  $E$  are square FmR (\ref{3Step2} and  \ref{6Step2})  then  Proposition~\ref{FRmR-FCmR} says that $\overline{B'}$ is square FmR. 

\item  Consider the zero block $Z$ obtained by joining together the zero blocks below and above $B'$. By Proposition~\ref{corBig}  $Z$  is  Medium   since $B'$ is wide or square (\ref{7Step2})  and $\overline{B'}$ is square.

\item As $A''$ is square (\ref{3Step2}), $B'$ is wide or square (\ref{7Step2}) and $E$ is square  (\ref{6Step2}) then $M''$ is wide or square. As  $M''$ has semifactor sets then $M''$ is FmR (see Proposition~\ref{maxRank<->noFactorSet}). Moreover, $F_1\cup F_2$  span all columns of $M''$. Recall the discussion after Definition~\ref{semifactorSetDefinition}  where it was explained that in a wide or square FRmR ACI-matrix the  set $\{1,\ldots,n\}$ is a  semifactor set. So $F_1\cup F_2$ is a semifactor set.

\item  $F_1 \cap F_2$ is a semifactor set of $M''$ since $Z$ is a Medium zero block, $B'$ is FRmR (\ref{7Step2}) and $\overline{B'}$ is FCmR (it is square FmR).
\end{itemize}
\end{enumerate}

\end{enumerate}

\item Suppose $r_1=0$ and $t_2>0$. Then  
\begin{align*} 
& \phantom{XXXx} \overset{F_1}{\overbrace{\phantom{XXXx}}} \nonumber \\[-5mm]
& 
M''=\left[\begin{array}{cc|cc}
A & B & C & D \\ \cline{1-4}
0    & 0  &  E' & F' \\
0    & 0  &  0 & F''
\end{array}\right] \hspace{-2mm} \begin{array}{l}
 \}  \ r=r_2 \\
 \}  \ t_1 \\
 \}  \ t_2
\end{array} \\[-5mm]
 & \phantom{XXXXxXl} \underset{F_2}{\underbrace{\phantom{XXxx}}} \nonumber 
\end{align*}
As $F_2$ is a factor (resp. semifactor) set  of $M''$ and $\begin{smat}
B & C \\
0    & E'
\end{smat}$ 
has  linearly independent rows,  then  its complementary    in $M''$   $\begin{smat} 0  & F''\end{smat}$  is FCmR (see Lemma~\ref{RLItoFRC}). Which  is impossible because FCmR ACI-matrices can not have columns full of zeros.

\item Suppose $r_1=0$ and $t_2=0$. Then $M''=M'$ (see~(\ref{M'})). 
As $\begin{smat}
B & C \\
0    & E
\end{smat}$  
has  linearly independent rows then    $F_2$ is not a factor (resp. semifactor) set of $M''$.  Contradiction.

\end{itemize} 
\end{proof}

As we explained before, Theorem~\ref{2semifactor-disjointcase} together with Theorem~\ref{2non-overlappingCupAndCapFS} add up to the following result.
\begin{theorem} \label{cor6}
The intersection and the union of two factor (resp. semifactor) sets of an ACI-matrix are factor (resp. semifactor) sets.
\end{theorem}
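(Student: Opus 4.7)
The plan is to derive Theorem~\ref{cor6} simply by combining the three preceding lemmas (\ref{constant-nonFullRank.2factorSetsCantBeDisjoint}, \ref{SemifactorSet-intersection}, \ref{SemifactorsInTall}) with the two preceding theorems (\ref{2semifactor-disjointcase} and \ref{2non-overlappingCupAndCapFS}). All the substantial work has already been done in those statements; what remains is a short case analysis on whether the two given sets are disjoint or overlapping.

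First I would dispose of the factor-set case. Let $F_1$ and $F_2$ be two factor sets of an ACI-matrix $M$. By Lemma~\ref{constant-nonFullRank.2factorSetsCantBeDisjoint}, $F_1$ and $F_2$ cannot be disjoint, so they overlap. Then Theorem~\ref{2non-overlappingCupAndCapFS} applied to the factor-set case immediately yields that both $F_1\cap F_2$ and $F_1\cup F_2$ are factor sets of $M$.

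Next I would handle the semifactor-set case, splitting according to the shape of $M$. Let $F_1$ and $F_2$ be two semifactor sets of $M$. If $M$ is wide, then Lemma~\ref{SemifactorSet-intersection} forces $F_1$ and $F_2$ to overlap, and Theorem~\ref{2non-overlappingCupAndCapFS} applied to the semifactor-set case gives that $F_1\cap F_2$ and $F_1\cup F_2$ are semifactor sets. If instead $M$ is tall or square, then by Lemma~\ref{SemifactorsInTall} the two possibilities occur: if $F_1$ and $F_2$ overlap, Theorem~\ref{2non-overlappingCupAndCapFS} again delivers the conclusion, while if they are disjoint, Theorem~\ref{2semifactor-disjointcase} provides exactly the required statement that $F_1\cap F_2$ and $F_1\cup F_2$ are semifactor sets.

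Since every case is covered by one of the two cited theorems, the proof is complete. There is no real obstacle here: all the technical machinery (sweeps from bottom to top, the Big/Medium distinction, the bookkeeping with linearly independent rows, and the \emph{FRmR}/\emph{FCmR}/square FmR arguments of Theorems~\ref{2semifactor-disjointcase} and~\ref{2non-overlappingCupAndCapFS}) has already been absorbed upstream. The only thing one must be careful to check is that the dichotomy disjoint/overlapping truly exhausts all possibilities in each regime (factor vs.\ semifactor, wide vs.\ tall-or-square), which it does by the three lemmas quoted.
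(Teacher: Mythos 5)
Your proof is correct and is essentially the paper's own argument: the paper derives Theorem~\ref{cor6} by exactly this dichotomy, using Lemmas~\ref{constant-nonFullRank.2factorSetsCantBeDisjoint} and~\ref{SemifactorSet-intersection} to rule out disjointness except for semifactor sets of tall or square matrices, and then invoking Theorem~\ref{2non-overlappingCupAndCapFS} for the overlapping case and Theorem~\ref{2semifactor-disjointcase} for the remaining disjoint case. (Note only that Lemma~\ref{SemifactorsInTall} is not logically needed, since disjoint/overlapping is exhaustive by itself.)
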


\section{The WST-decomposition for ACI-matrices} \label{WST}

 Note  that the set of factor sets of a non FmR ACI-matrix is a partial order set where the order is given by set inclusion. Indeed,  Theorem~\ref{cor6} tells us that  this set  is a lattice. Since it is a finite lattice then it is  bounded. So there is a factor  set that is the maximum or {\bf top factor set}: the union of all factor sets which we will denote $F_{\top}$. And there is another factor  set that is the minimum or {\bf bottom factor set}: the intersection of all factor  sets which we will denote $F_{\bot}$.

The previous paragraph is also valid when we substitute factor sets of a non FmR ACI-matrix by semifactor sets of a FmR ACI-matrix. 


\begin{theorem} \label{existance&uniqueness}
For any ACI-matrix $M$ there exists a nonsingular $R$ and a permutation matrix $Q$ such that $M$ can be decomposed as follows:
\begin{align} \label{ACI-decompositionABC}
RMQ= \left[\begin{array}{ccc}
\bf{W} & * & *  \\
0 & \bf{S} & * \\ 
0    & 0              &  \bf{T} \\
\end{array}\right]  
\end{align}
where ${\bf W}$ is  a  wide FmR or void, ${\bf S}$ is square FmR or void, and ${\bf T}$ is  a  tall FmR or void. 

Moreover, the ACI-matrices ${\bf W}$, ${\bf S}$ and ${\bf T}$ in decomposition~(\ref{ACI-decompositionABC}) are unique up to equivalence if we impose that ${\bf S}$ is as large as possible  for such a decomposition.
\end{theorem}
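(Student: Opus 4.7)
The plan is to exploit the lattice structure of factor (resp.\ semifactor) sets from Theorem~\ref{cor6} and to convert the two block decompositions corresponding to the bottom and top of that lattice into a single three-block WST-decomposition; uniqueness will then follow from two nested applications of Lemma~\ref{TwoDecompDivisor2}.

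For existence, I let $F_\bot\subseteq F_\top$ denote the bottom and top elements of the lattice of factor sets (if $M$ is not FmR) or of semifactor sets (if $M$ is FmR). Since $F_\bot\subseteq F_\top$, I perform two successive sweeps from bottom to top of $M$---first with respect to $F_\top$, then with respect to $F_\bot$---and, after an appropriate row reordering and a column permutation grouping the columns of $F_\bot$ first, then $F_\top\setminus F_\bot$, then $\overline{F_\top}$, I obtain
\[
RMQ \;=\; \begin{bmatrix} W & * & * \\ 0 & S & * \\ 0 & 0 & T \end{bmatrix}.
\]
Applying Lemma~\ref{RLItoFRC} with respect to $F_\top$ shows that $\begin{smat} W & * \\ 0 & S \end{smat}$ is FRmR and $T$ is FCmR, and applying it with respect to $F_\bot$ shows that $W$ is FRmR and $\begin{smat} S & * \\ 0 & T \end{smat}$ is FCmR. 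A rank count then forces $S$ to be both FRmR and FCmR, hence square FmR. For the shapes of $W$ and $T$: if $M$ is not FmR then by the definition of factor set $W$ is wide and $T$ is tall; if $M$ is FmR, I invoke Proposition~\ref{corBig}(ii)---when $M$ is wide, case~(3) of that proposition forces $W$ wide (it cannot be square since a square $W$ would contradict case~(3)), while when $M$ is square or tall, $\emptyset$ is already a semifactor set of $M$ by Case~3 of Table~\ref{tabla2}, so $F_\bot=\emptyset$ and $W$ is void; the statement for $T$ is symmetric.

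For uniqueness, suppose $M$ has two WST-decompositions with $|S|$ maximal, and for $i=1,2$ let $G_i\subseteq F_i\subseteq\{1,\ldots,n\}$ denote the column indices occupied by $W_i$ and by $W_i\cup S_i$, respectively. A direct check combining Propositions~\ref{FRmR-FCmR} and~\ref{corBig} shows that each $F_i$ and each $G_i$ is a factor (resp.\ semifactor) set of $M$. Hence $F_\bot\subseteq G_i\subseteq F_i\subseteq F_\top$, so
\[
|S_i| \;=\; |F_i|-|G_i| \;\leq\; |F_\top|-|F_\bot|,
\]
with the right-hand bound attained by the existence construction; maximality of $|S_i|$ therefore forces $F_1=F_2=F_\top$ and $G_1=G_2=F_\bot$. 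Consequently the two column permutations coincide up to a block-diagonal permutation respecting the three column groups, and applying Lemma~\ref{TwoDecompDivisor2} first at the $F_\top$-level (whose top-left block is FRmR, hence has linearly independent rows by Proposition~\ref{SC->lir}) yields $\begin{smat} W_1 & * \\ 0 & S_1 \end{smat}\sim\begin{smat} W_2 & * \\ 0 & S_2 \end{smat}$ and $T_1\sim T_2$; applying it a second time within these equivalent top blocks at the $F_\bot$-level gives $W_1\sim W_2$ and $S_1\sim S_2$.

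The most delicate step is the case analysis in the existence paragraph that pins down the shapes of $W$ and $T$ when $M$ is FmR, together with the parallel verification that the sets $F_i$ and $G_i$ arising from any WST-decomposition are genuine (semi)factor sets; both reduce to careful bookkeeping with Proposition~\ref{corBig} applied to the several possibilities for the degenerate or void diagonal blocks listed in Tables~\ref{tabla1} and~\ref{tabla2}.
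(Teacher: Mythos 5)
Your proposal follows essentially the same route as the paper: existence by sweeping with respect to $F_\top$ and then $F_\bot$ from the lattice of factor/semifactor sets (Theorem~\ref{cor6}) and invoking Lemma~\ref{RLItoFRC} at both levels to force ${\bf S}$ to be square FmR, and uniqueness by showing maximality of ${\bf S}$ pins the column groups to $F_\bot$ and $F_\top$ and then applying Lemma~\ref{TwoDecompDivisor2} twice. The only differences are cosmetic (the order of the two applications of Lemma~\ref{TwoDecompDivisor2} and the level of detail in the degenerate-block bookkeeping, which the paper carries out case by case).
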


\begin{proof}
Recall that when $M$ is not FmR (resp. $M$ is FmR) then it has at least one factor (resp. semifactor) set. Then this factor (resp. semifactor) set provides a decomposition of type~(\ref{ACI-decompositionABC}) where ${\bf S}$ is void. In this way the existence is solved in a trivial way, but we want to be more demanding and give the  decompositions where the ACI-submatrix ${\bf S}$ is as large as possible because these decompositions will lead to uniqueness. 

How do we find such decompositions? Suppose we are given a decomposition as in~(\ref{ACI-decompositionABC}) where ${\bf W}$ is a wide FmR or void, ${\bf S}$ is square FmR or void, and ${\bf T}$ is  a  tall FmR or void.
Let $F_1$ be the set of columns corresponding to ${\bf W}$, and $F_2$ be the set of columns corresponding to $\begin{smat} {\bf W}& * \\ 0 & {\bf S} \end{smat}$. It is easy to check that the $2\times 2$ block partition $\scriptsize\left[\begin{array}{c|cc}
\bf{W} & * & *  \\ \hline
0 & \bf{S} & * \\ 
0    & 0              &  \bf{T} \\
\end{array}\right]$ is an $F_1$-decomposition, and $\scriptsize\left[\begin{array}{cc|c}
\bf{W} & * & *  \\
0 & \bf{S} & * \\  \hline
0    & 0              &  \bf{T} \\
\end{array}\right]$ is an $F_2$-decomposition. Note that the order of $\bf{S}$ corresponds to the difference $\#F_2-\#F_1$.
If we take $F_1=F_{\bot}$ and $F_2=F_\top$ then we will see (Existance) that we obtain a decomposition of type~(\ref{ACI-decompositionABC}). Moreover, as $F_\top$ is the union of all factor (resp. semifactor) sets it is the largest and is unique, and as $F_\bot$ is the intersection of all factor (resp. semifactor) sets it is the smallest and is unique. Since we are taking the extreme sizes, then we will obtain the largest possible order for ${\bf S}$.


%

\begin{description}
\item[Existence.]
Let $M$ be an $m\times n$ ACI-matrix. We will make a systematic analysis to be sure that nothing wrong happens even when some of the ACI-submatrices become degenerate or void:
\begin{description}

\item[$M$ is FmR.] We divide the proof into three cases:
\begin{description}
\item[$M$ is  tall.]  As we saw after Definition~\ref{semifactorSetDefinition} the empty set is a semifactor set, so $F_\bot=\emptyset$.  Without loss of generality we can assume that 
$F_\top=\{1,\ldots,k\} $ 
with $0\leq  k \leq  n$. Three subcases are possible:
\begin{enumerate}[(a)]
\item $\emptyset=F_\bot=F_{\top}$. Take ${\bf W}$  void, ${\bf S}$ void, and ${\bf T}=M$ .

\item $\emptyset=F_\bot \subsetneq F_{\top} \subsetneq \{1,\ldots,n\}$.  We do a sweep from bottom to top in $M$ with respect to  $F_\top$ and after reordering the rows we obtain 
\begin{align*}  
& \phantom{XXX} \overset{F_\top}{\overbrace{\phantom{XX}}}  \\[-5mm]
& M\sim \left[\begin{array}{cc}
A & B  \\ 
0  &  C\\
\end{array}\right] \hspace{-2mm}  \nonumber 
\end{align*}
where  $A$ is square with linearly independent rows and $C$ is  tall (see Proposition~\ref{corBig}). And from Lemma~\ref{RLItoFRC}   $A$ is  FmR and $C$ is  FCmR. Take ${\bf W}$ is void, ${\bf S}=A$ and ${\bf T}=C$.

\item $\emptyset=F_\bot \subset F_{\top} = \{1,\ldots,n\}$.  We do a sweep from bottom to top in $M$ with respect to  $F_\top$ and after reordering the rows we obtain
$M\sim \begin{smat}   A  \\  0    \end{smat}$
were  $A$ is square  (see Proposition~\ref{corBig}) and has linearly independent rows. And from Lemma~\ref{RLItoFRC}) $A$ is FmR. Take ${\bf W}$  void, ${\bf S}=A$ and ${\bf T}$ tall degenerate.

\end{enumerate}

\item[$M$ is  wide.] As we saw after Definition~\ref{semifactorSetDefinition} the set $\{1,\ldots,n\}$ is a semifactor set, so $F_\top=\{1,\ldots,n\}$. Without loss of generality we can assume that 
$F_\bot=\{1,\ldots,k\} $ 
with $1\leq  k \leq  n$.  Note that $F_{\bot}=\emptyset$ is not possible  since it will not generate a Medium zero block. Three  subcases are possible:

\begin{enumerate}[(a)]
\item $\emptyset \neq F_\bot = F_{\top} = \{1,\ldots,n\}$. Take ${\bf W}=M$, ${\bf S}$ void and ${\bf T}$ void. 

\item $\emptyset \neq F_\bot \subsetneq  F_{\top} = \{1,\ldots,n\}$ and all the entries of the columns corresponding to $F_{\bot}$  are equal to zero.  Then $M=\begin{smat} 0 & C\end{smat}$ where $C$ is square  (see Proposition~\ref{corBig}) and FmR. Take ${\bf W}$  wide degenerate, ${\bf S}=C$ and ${\bf T}$ void.

\item $ \emptyset \neq F_\bot \subsetneq  F_{\top} = \{1,\ldots,n\}$ and not all the entries of the columns corresponding to $F_{\bot}$  are equal to zero.  We do a sweep from bottom to top in $M$ with respect to  $F_\bot$ and after reordering the rows we obtain 
\begin{align*}  
& M\sim \left[\begin{array}{cc}
A& B  \\ 
0  &  C\\
\end{array}\right] \hspace{-2mm}  \\[-4mm]
 & \phantom{XXx} \underset{\sigma_{F_\bot}(F_\bot)}{\underbrace{\phantom{Xx}}} \nonumber 
\end{align*}
were  $A$ is  wide with linearly independent rows and $C$ is square (see Proposition~\ref{corBig}). And from Lemma~\ref{RLItoFRC}   $A$ is  FRmR and $C$ is  FmR. Take  ${\bf W}=A$, ${\bf S}=C$ and  ${\bf T}$ void.

\end{enumerate}

\item[$M$ is square.] Take ${\bf W}$ void, ${\bf S}=M$ and ${\bf T}$ void. 
\end{description}

\item[$M$ is not FmR.] Then $M$ has factor sets. Note that $F_{\bot}=\emptyset$ is not possible  since it will not generate a Big zero block. Without loss of generality we can assume that 
$$F_\bot=\{1,\ldots,h\} \quad \text{and} \quad F_\top=\{1,\ldots,k\} $$ 
with $0< h \leq k \leq  n$. We consider four cases:

\begin{enumerate}[(1)]
\item $\mathbf{0< h < k <  n}$. We distinguish two possibilities: 

\begin{enumerate}[(a)]
\item Not all the entries of the columns corresponding to $F_{\bot}$  are equal to zero. We do a sweep from bottom to top in $M$ with respect to  $F_\top$ and after reordering the rows we obtain
\begin{align*} 
& \phantom{XXXXXX} \overset{F_\top}{\overbrace{\phantom{XXXx}}} \nonumber \\[-4mm]
& 
M\sim M'=\left[\begin{array}{cc|c}
A & B & C  \\  \cline{1-3}
0    & 0  &  D \\
\end{array}\right] \hspace{-2mm}  \\[-3.5mm]
 & \phantom{XXXXXX} \underset{F_\bot}{\underbrace{\phantom{XX}}} \nonumber 
\end{align*}
where  $\begin{smat}  A & B\end{smat}$ has linearly independent rows. By Lemma~\ref{RLItoFRC}  $\begin{smat}  A & B\end{smat}$ is FRmR and $D$ is FCmR. As $M'$ is obtained  from $M$ without permuting columns, then $F_\bot$ and $F_\top$ are factor sets of $M'$.  
Now we do a sweep from bottom to top in $M'$ with respect to  $F_\bot$ and after reordering the rows we obtain
\begin{align} \label{M''}
& \phantom{XXXXXXX} \overset{F_\top}{\overbrace{\phantom{XXXx}}} \nonumber \\[-4mm]
& 
M' \sim M''=\left[\begin{array}{cc|c}
A'' & B'' & C''  \\
0 & B' & C' \\ \cline{1-3}
0    & 0              &  D \\
\end{array}\right] \hspace{-2mm}  \\[-3.5mm]
 & \phantom{XXXXXXX} \underset{F_\bot}{\underbrace{\phantom{XX}}} \nonumber 
\end{align}
where  $A''$ has linearly independent rows. By Lemma~\ref{RLItoFRC}  $A''$ is FRmR and $\begin{smat}
B' & C' \\ 
0              &  D \\
\end{smat}$ is FCmR. And thus $B'$ is FCmR. 
On the other hand, as $\begin{smat} A & B \end{smat}$ is FRmR and  $\begin{smat} A & B \end{smat} \sim  \begin{smat} A'' & B'' \\ 0  &  B'  \end{smat}$  then $\begin{smat} A'' & B'' \\ 0  &  B'  \end{smat}$ is also FRmR. And thus $B'$ is FRmR. Since $B'$ is FRmR and FCmR then it must be square FmR. Take  ${\bf W}=A''$, ${\bf S}=B'$ and ${\bf T}=D$.

\item  All the entries of the columns corresponding to $F_{\bot}$  are equal to zero.  Then we proceed as in case (1)(a) although now  it is not necessary to perform the second sweep. We finish with $M'=\begin{smat} 0 & B & C \\ 0 & 0 & D \end{smat}$. Take  ${\bf W}$  wide degenerate, ${\bf S}=B$ and ${\bf T}=D$.
\end{enumerate}

\item $\mathbf{0< h < k =  n}$. We distinguish two possibilities: 

\begin{enumerate}[(a)]
\item Not all the entries of the columns corresponding to $F_{\bot}$  are equal to zero. 
The argument is the same as in the case (1)(a) although in  this case the  last  column does  not appear  as $F_{\top}=\{1,\ldots, n\}$.  So  $M'=\begin{smat} A & B \\  0 & 0 \end{smat}$ and  $M''=\begin{smat} A'' & B'' \\ 0 & B' \\ 0 & 0 \end{smat}$. Take ${\bf W}=A''$, ${\bf S}=B'$ and ${\bf T}$  tall degenerate.

\item All the entries of the columns corresponding to $F_{\bot}$  are equal to zero. 
Then we proceed as in case (2)(a) although now  it is not necessary to perform the second sweep.  So  we finish with  $M'=\begin{smat} 0 & B \\ 0 &   0 \end{smat}$. Take ${\bf W}$  wide degenerate, ${\bf S}=B$,  and ${\bf T}$  tall degenerate.

\end{enumerate}

\item $\mathbf{0< h = k <  n}$. We distinguish two possibilities: 

\begin{enumerate}[(a)]
\item Not all the entries of the columns corresponding to $F_{\bot}$  are equal to zero. The argument is the same as (1)(a) although in this case the  second column does not appear as $F_{\bot}=F_{\top}$.  Then only one sweep is necessary. So $M'=\begin{smat} A & C \\ 0 & D \end{smat}$. Take ${\bf W}=A$, ${\bf S}$ void and ${\bf T}=D$.

\item All the entries of the columns corresponding to $F_{\bot}$  are equal to zero. Then $M=\begin{smat} 0 & D\end{smat}$. Take ${\bf W}$  wide degenerate, ${\bf S}$ void,  and ${\bf T}=D$.

\end{enumerate}

\item $\mathbf{0< h = k =  n}$. We distinguish two possibilities: 

\begin{enumerate}[(a)]
\item Not all the entries of the columns corresponding to $F_{\bot}$  are equal to zero. The argument is the same as (3)(a) although in this case the last column does not appear as $F_{\top}=\{1,\ldots, n\}$. Then only one sweep is necessary. So $M'=\begin{smat} A \\ 0 \end{smat}$. Take ${\bf W}=A$, ${\bf S}$ void and ${\bf T}$ tall degenerate.

\item All the entries of the columns corresponding to $F_{\bot}$  are equal to zero. Then $M=\begin{smat} 0 \end{smat}$. Take ${\bf W}$  wide degenerate, ${\bf S}$ void,  and ${\bf T}$  tall degenerate.

\end{enumerate}

\end{enumerate}

\end{description}

\item[Uniqueness up to equivalence of $\bf{W}$, $\bf{S}$ and $\bf{T}$.]
We will do the FmR case and the non FmR case together. Actually, we will do all subcases that were studied in the Existence part together, 
since at this point to adapt the general argument to the different subcases should be straightforward (for example, some of the submatrices $P$, $P'$ or $P''$ involved in~(\ref{wsca}) can be void).

So  assume that we have two different decompositions 
 \begin{align*}
& \phantom{XXXXXX} \overset{F_\top}{\overbrace{\phantom{XXXXx}}}  \hspace{53mm} \overset{F_\top}{\overbrace{\phantom{XXXxx}}}   \\[-5mm]
&  R_1MQ_1=\left[\begin{array}{ccc} \mathbf{W}_1 & * & *  \\ 0 & \mathbf{S}_1 & * \\  0 & 0 &  \mathbf{T}_1 \end{array}\right]   \quad \text{ and }\quad   R_2MQ_2=\left[\begin{array}{ccc} \mathbf{W}_2 & * & *  \\ 0 & \mathbf{S}_2 & * \\  0 & 0 &  \mathbf{T}_2 \\ \end{array}\right]\hspace{-2mm}   \\[-4mm]
& \phantom{XXXXXX} \underset{F_\bot}{\underbrace{\phantom{XX}}}  \hspace{63mm} \underset{F_\bot}{\underbrace{\phantom{XX}}} 
\end{align*}

where $R_1$ and $R_2$ are nonsingular matrices, $Q_1$ and $Q_2$ are permutation matrices, $\mathbf{W}_1$ and $\mathbf{W}_2$ are wide FmR or void, $\mathbf{S}_1$ and $\mathbf{S}_2$ are square FmR or void, and $\mathbf{T}_1$ and $\mathbf{T}_2$ are tall FmR or void. Then
$$\begin{matrix}
 \hspace{-20mm} \overset{F_\top}{\overbrace{\phantom{XXXxx}}} \phantom{XXx}   \phantom{XXXXXXs} \overset{F_\top}{\overbrace{\phantom{XXXxx}}}   \\[-4mm]
  \left[\begin{array}{ccc} \mathbf{W}_1 & * & *  \\ 0 & \mathbf{S}_1 & * \\  0 & 0 &  \mathbf{T}_1 \end{array}\right]    =   R_1 R_2^{-1}\left[\begin{array}{ccc} \mathbf{W}_2 & * & *  \\ 0 & \mathbf{S}_2 & * \\  0 & 0 &  \mathbf{T}_2 \\ \end{array}\right] Q_2^{-1} Q_1. \hspace{-2mm} &  \\[-3.5mm]
 \hspace{-30mm}  \underset{F_\bot}{\underbrace{\phantom{XX}}} \phantom{XXXXx}   \phantom{XXXXXXx} \underset{F_\bot}{\underbrace{\phantom{XX}}} 
\end{matrix}$$
Note that the three groups of columns $F_\bot$, $F_\top \setminus F_\bot$ and $\{1,\ldots, n\}\setminus F_\top$ do not change of position. But the columns of each group might get permuted so there are three permutation matrices ($P$ of order $\#F_{\bot}$, $P'$ of order $\#F_{\top}-\#F_{\bot}$, and $P''$ of order $n-\#F_{\top}$) such that $Q_2^{-1} Q_1=\begin{smat} 
P & 0 & 0 \\ 
0 & P' & 0 \\
0 & 0 & P''
\end{smat}$.  So 
\begin{align} \label{wsca}
  \left[\begin{array}{c|cc} \mathbf{W}_1 & * & *  \\ \cline{1-3} 0 & \mathbf{S}_1 & * \\  0 & 0 &  \mathbf{T}_1 \end{array}\right]  
 =   R_1 R_2^{-1}
\left[\begin{array}{c|cc} \mathbf{W}_2 & * & *  \\ \cline{1-3} 0 & \mathbf{S}_2 & * \\  0 & 0 &  \mathbf{T}_2 \\ \end{array}\right] 
\left[\begin{array}{c|cc} 
P & 0 & 0 \\ \cline{1-3}
0 & P' & 0 \\
0 & 0 & P''
\end{array}\right]
\end{align} 
where the lines define  $2\times 2$ block ACI-matrices: we   consider $\begin{smat} \mathbf{S}_1 & * \\ 0 & \mathbf{T}_1 \end{smat}$, $\begin{smat} \mathbf{S}_2 & * \\ 0 & \mathbf{T}_2 \end{smat}$ and $\begin{smat} P' & 0 \\ 0 & P'' \end{smat}$ as just one block. Since $\mathbf{W}_1$ and $\mathbf{W}_2$ are wide FRmR then they have linear independent rows and Lemma~\ref{TwoDecompDivisor2} implies that
\begin{align} \label{a1anda2Etc}
\mathbf{W}_1\sim \mathbf{W}_2 \quad \text{and} \quad \begin{bmatrix} \mathbf{S}_1 & * \\ 0 & \mathbf{T}_1 \end{bmatrix} \sim \begin{bmatrix} \mathbf{S}_2 & * \\ 0 & \mathbf{T}_2 \end{bmatrix}.
\end{align}
In the proof of Lemma~\ref{TwoDecompDivisor2} we saw that $R_1 R_2^{-1}=\begin{smat} R & * \\ 0 & R' \end{smat}$ where $R$ is nonsingular of order $\rows(\mathbf{W}_2)$ and $R'$ is nonsingular of order $\rows(\mathbf{S}_2)+\rows(\mathbf{T}_2)$ with
\[
\begin{bmatrix} \mathbf{S}_1 & * \\ 0 & \mathbf{T}_1 \end{bmatrix} = R' \begin{bmatrix} \mathbf{S}_2 & * \\ 0 & \mathbf{T}_2 \end{bmatrix} \begin{bmatrix} P' & 0 \\ 0 & P'' \end{bmatrix}.
\]
Since $\mathbf{S}_1$ and $\mathbf{S}_2$ are square FmR then they have  linear independent rows and  Lemma~\ref{TwoDecompDivisor2} implies that 
$\mathbf{S}_1\sim \mathbf{S}_2$ and  $\mathbf{T}_1\sim \mathbf{T}_2$.
Which finishs the uniqueness part.
\end{description}
\end{proof}

Since the decomposition of Theorem~\ref{existance&uniqueness} involve a \textbf{W}ide (or void) $\bf{W}$,  a \textbf{S}quare (or void) $\bf{S}$ and  a \textbf{T}all (or void) $\bf{T}$, we will denote this decomposition the \textbf{WST-decomposition} for ACI-matrices whenever $\bf{S}$ is as large as possible.

\section{ACI-matrices of constantRank}

As a application of the WST-decomposition we will refine the main theorem of~\cite[Theorem 5]{MR2775784} by Huang and Zhan, which is a characterization of constantRank ACI-matrices.
The version of the theorem that will be given below makes the degenerate cases much more explicit than the original one. 
The same version of the theorem was already used in our work on ACI-matrices \cite{BoCa2} as ``\emph{Theorem 2.4 (detailed version)}''.

\begin{theorem} (\cite[Theorem 5]{MR2775784}, see also \cite[Theorem 2.4 (detailed version)]{BoCa2}) \label{HZCharacterization-1}
Let $M$ be a  $m\times n$ ACI-matrix of constantRank $\rho$ with $1 \leq \rho \leq \min \{m, n\}$ over a field  $\mathbb{F}$ with $|\mathbb{F}|\geq \max\{m,n+1\}$. Depending on  $m$, $n$ and $\rho$ we have the following possibilities:
\begin{enumerate}[(i)]

\item $\rho=m<n$ if and only if $ M \sim \begin{smat}
\vspace{-3mm}1 &            &  * &|& \\
\vspace{-1mm}   &\ddots  &     &| &*\\
0 &  & 1 &|& 
\end{smat}$.
\item $\rho=m=n$ if and only if  $M \sim \begin{smat} \vspace{-2mm} 1 & & * \\   & \ddots &  \\ 0 & & 1 \end{smat}.$
\item $\rho=n<m$ if and only if  $ M \sim \begin{smat}
\vspace{-1mm} & * & \\ \cline{1-3}
\vspace{-2mm} 1 &  &  *\\
 &\ddots  & \\
0 &  & 1 
\end{smat}$.

\item $1\leq \rho<\min\{m,n\}$ if and only if for some positive integers $r$ and $s$ with $r+s=m+n-\rho$   
\begin{equation} \label{HZtype}
M \sim 
{\footnotesize \left[\begin{array}{cccc|ccc}
1 &  & \multicolumn{1}{c|}{*} & \multicolumn{1}{c|}{\multirow{3}{*}{\Large $\ * \ $}} &  \multicolumn{3}{c}{\multirow{3}{*}{\Large $*$}} \\ 
  & \ddots & \multicolumn{1}{c|}{} &  \\ 
 0 & & \multicolumn{1}{c|}{1} &  &  & & \\ \hline
 \multicolumn{4}{c|}{\multirow{4}{*}{\Large$0_{r\times s}$}} & \multicolumn{3}{c}{\multirow{1}{*}{\Large $*$}}\\ 
&&&&&& \\
 \cline{5-7}
 & & & & 1 &  & *\\ 
 & & & &  & \ddots &  \\
 & & & & 0 &  & 1 \\ 
\end{array}\right]}
\end{equation}
 where the upper blocks  do not appear if  $r=m$ and   the right blocks do not appear if  $s=n$. 
 \end{enumerate}
\end{theorem}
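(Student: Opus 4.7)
This is essentially a restatement of~\cite[Theorem 5]{MR2775784} with the degenerate cases made explicit; the WST-decomposition of Theorem~\ref{existance&uniqueness} provides the cleanest route to the detailed form.

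The ``if'' direction is a direct rank computation in each of the four displayed forms. The triangular blocks of $1$'s are constant, so they attain full rank on every completion, yielding $\minRank(M)\ge\rho$. In cases $(i)$--$(iii)$ the matrix is visibly FmR with $\maxRank=\rho$; in case $(iv)$ the Big zero block $0_{r\times s}$ simultaneously caps $\maxRank(M)\le(m-r)+(n-s)=\rho$ via Theorem~\ref{SubmatrizCerosFirst}. Hence $\minRank(M)=\maxRank(M)=\rho$ in every case.

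For the ``only if'' direction, apply Theorem~\ref{existance&uniqueness} to obtain
\[
M \sim \begin{bmatrix} \mathbf{W} & * & * \\ 0 & \mathbf{S} & * \\ 0 & 0 & \mathbf{T} \end{bmatrix},
\]
with $\mathbf{W}$ wide FRmR or void, $\mathbf{S}$ square FmR or void, $\mathbf{T}$ tall FCmR or void, and $\rho=\rows(\mathbf{W})+\rows(\mathbf{S})+\cols(\mathbf{T})$. The crucial intermediate claim, which I expect to be the main obstacle, is to show that each of $\mathbf{W}$, $\mathbf{S}$, $\mathbf{T}$ inherits the constantRank property from $M$, with constant ranks $\rows(\mathbf{W})$, $\rows(\mathbf{S})$, $\cols(\mathbf{T})$ respectively. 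The idea is to exploit that $\mathbf{W}$ has linearly independent rows (Proposition~\ref{SC->lir}) and, dually, that $\mathbf{T}$ has linearly independent columns, so that for any rank-deficient completion of one of these three blocks one can manufacture a completion of $M$ of rank strictly less than $\rho$, contradicting constantRank. This step is where the block structure of the WST-decomposition really does the work.

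Once each of $\mathbf{W},\mathbf{S},\mathbf{T}$ is known to be FmR and constantRank, the FmR case of~\cite[Theorem 5]{MR2775784}, for which the field-size hypothesis $|\mathbb{F}|\ge\max\{m,n+1\}$ is needed, places each block into triangular normal form with $1$'s on the diagonal: $\mathbf{W}$ is equivalent to $[\,T_\mathbf{W}\mid *\,]$, $\mathbf{S}$ is equivalent to $T_\mathbf{S}$, and $\mathbf{T}$ is equivalent to a tall matrix with $T_\mathbf{T}$ stacked below a block of~$*$'s. Substituting these three normal forms into the WST template, and then permuting columns to place the $T_\mathbf{S}$ block immediately to the right of $T_\mathbf{W}$ while keeping $T_\mathbf{T}$ in the bottom-right, produces the form~\eqref{HZtype} of case $(iv)$ with $r=\rows(\mathbf{T})$ and $s=\cols(\mathbf{W})+\cols(\mathbf{S})$ satisfying $r+s=m+n-\rho$. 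Cases $(i)$, $(ii)$, $(iii)$ correspond to the degenerate WST-decompositions in which $\mathbf{T}$ is void, both $\mathbf{W}$ and $\mathbf{T}$ are void, or $\mathbf{W}$ is void, respectively; the template then collapses and the same merging procedure yields the stated single-triangular-block forms.
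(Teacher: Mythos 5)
First, note that the paper does not actually prove Theorem~\ref{HZCharacterization-1}: it is imported from \cite[Theorem 5]{MR2775784} (in the detailed form of \cite[Theorem 2.4]{BoCa2}), and the WST-based reduction you describe is, almost line for line, the paper's proof of the later refinement Theorem~\ref{maincorollary}. Your ``if'' direction is correct, and your intermediate claim that $\mathbf{W}$, $\mathbf{S}$, $\mathbf{T}$ inherit constantRank from $M$ is sound and is exactly step (i) of that later proof (one small caveat: the paper never develops ``linear independence of columns'' of an ACI-matrix and you do not need it --- the inequality $\rank \le \rows(\widehat{\mathbf{W}})+\rows(\widehat{\mathbf{S}})+\rank(\widehat{\mathbf{T}})$ already does the job for $\mathbf{T}$, and its transposed analogue is unnecessary).

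The genuine gap is circularity in the ``only if'' direction. After reducing to the three blocks you invoke ``the FmR case of \cite[Theorem 5]{MR2775784}'' to put each block into triangular form with $1$'s on the diagonal --- but that FmR case is precisely items (i)--(iii) of the theorem you are proving, and it is where all the difficulty (and the only use of the hypothesis $|\mathbb{F}|\ge\max\{m,n+1\}$) lives. Your closing claim that cases (i)--(iii) follow from the degenerate WST-decompositions ``by the same merging procedure'' is circular: when $M$ is, say, square FmR and constantRank, its WST-decomposition is $\mathbf{S}=M$ with $\mathbf{W}$ and $\mathbf{T}$ void, and the merging procedure then asks for the triangular normal form of $\mathbf{S}$, which is case (ii) itself; the wide and tall cases reduce similarly to themselves (or to smaller instances with the square case as an unavoidable base). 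So your plan correctly derives case (iv), with all the degenerate bookkeeping, \emph{from} cases (i)--(iii), but establishes nothing about (i)--(iii); a genuine proof of those requires the completion-counting argument of \cite{MR2775784} or \cite{BoCa2}, which uses the field-size bound to produce enough full-rank completions to force the triangular structure. As a derivation of the detailed statement from the original Huang--Zhan theorem your outline is fine and matches how the paper uses the result; as a self-contained proof it is missing its core.
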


Finally, we present the refinement of Theorem~\ref{HZCharacterization-1} that we were talking about.

\begin{theorem} \label{maincorollary}
 Let $M$ be an $m\times n$ ACI-matrix $M$ over a field $\mathbb{F}$ such that $|\mathbb{F}|\geq \max\{m,n+1\}$. Then $M$ is constantRank if and only if there exist a nonsingular $R$ and a permutation $Q$ such that
\begin{align}\label{RefinamentTheorem71}
RMQ=
{\footnotesize\begin{bmatrix}
\begin{array}{|ccc|c|}\cline{1-4}
1 &  &  * & \\
 &\ddots  & & * \\
0 &  & 1 & \\ \cline{1-4}
\end{array} & * & * \\
0 & \hspace{-2.4ex}\begin{array}{|ccc|}\cline{1-3} 
1 &  & * \\ 
 & \ddots &  \\ 
0 &  & 1 \\ \cline{1-3} \end{array} & * \\
0 & 0 & \hspace{-2.4ex}
\begin{array}{|ccc|}\cline{1-3} 
 & * &  \\ \cline{1-3} 
1 &  & \\ 
 & \ddots &  \\ 
0 &  & 1 \\ \cline{1-3} 
\end{array}
\end{bmatrix}}.
\end{align}
where instead of the ACI-submatrix $\begin{smat}
\vspace{-3mm}1 &            &  * &|& \\
\vspace{-1mm}   &\ddots  &     &| &*\\
0 &  & 1 &|& \\ 
\end{smat}$ there could be a wide degenerate or a void, instead of $\begin{smat}
\vspace{-2mm}1 &  &  *\\
 &\ddots  & \\
0 &  & 1 \\ 
\end{smat}$ there could be a void, and instead of $\begin{smat}
\vspace{-1mm} & * & \\ \cline{1-3}
\vspace{-2mm} 1 &  &  *\\
 &\ddots  & \\
0 &  & 1 \\ 
\end{smat}$ there could be a tall degenerate or a void. If we impose that 
$\begin{smat}
\vspace{-2mm}1 &  &  *\\
 &\ddots  & \\
0 &  & 1 \\ 
\end{smat}$ is as large as possible 
then the three blocks are unique up to equivalence.
\end{theorem}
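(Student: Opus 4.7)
The plan is to derive Theorem~\ref{maincorollary} by combining Theorem~\ref{existance&uniqueness} (the WST-decomposition) with Theorem~\ref{HZCharacterization-1} (the Huang--Zhan classification of constantRank ACI-matrices) applied block by block. For the sufficiency direction, suppose $M$ is equivalent to the form~(\ref{RefinamentTheorem71}). The three diagonal blocks have the upper triangular shape with $1$'s on the main diagonal guaranteed by Theorem~\ref{HZCharacterization-1}, so in every completion the top-left block has full row rank $\rows(\mathbf{W})$, the middle block is nonsingular of order $\rows(\mathbf{S})$, and the bottom-right block has full column rank $\cols(\mathbf{T})$. Using the identity $\rank\begin{smat} A & B \\ 0 & C \end{smat}=\rank(A)+\rank(C)$, valid when $A$ has full row rank, applied twice to the $3\times 3$ block triangular structure, one obtains $\rank(\hat{M})=\rows(\mathbf{W})+\rows(\mathbf{S})+\cols(\mathbf{T})$ for every completion $\hat{M}$, so $M$ is constantRank.

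For the necessity, apply Theorem~\ref{existance&uniqueness} to $M$ to obtain a WST-decomposition $RMQ=\begin{smat}\mathbf{W}&*&*\\0&\mathbf{S}&*\\0&0&\mathbf{T}\end{smat}$ with $\mathbf{S}$ as large as possible. The key step, which I expect to be the main obstacle of the argument, is to verify that each of $\mathbf{W},\mathbf{S},\mathbf{T}$ inherits the constantRank property of $M$. To handle $\mathbf{W}$, fix an arbitrary completion $\hat{\mathbf{W}}$; since the indeterminates of $\mathbf{W}$, $\mathbf{S}$ and $\mathbf{T}$ lie in disjoint columns, they can be assigned independently, so one may extend $\hat{\mathbf{W}}$ to a completion $\hat{M}$ of $M$ in which $\hat{\mathbf{S}}$ is nonsingular and $\hat{\mathbf{T}}$ has full column rank (possible because both $\mathbf{S}$ and $\mathbf{T}$ are FmR). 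Using the nonsingularity of $\hat{\mathbf{S}}$ to clear the upper-middle $*$-entries by row operations, and then the full column rank of $\hat{\mathbf{T}}$ to clear the remaining top-right $*$-entries, $\hat{M}$ becomes row equivalent to the block diagonal matrix $\mathrm{diag}(\hat{\mathbf{W}},\hat{\mathbf{S}},\hat{\mathbf{T}})$ of rank $\rank(\hat{\mathbf{W}})+\rows(\mathbf{S})+\cols(\mathbf{T})$. Since $\rank(\hat{M})=\rows(\mathbf{W})+\rows(\mathbf{S})+\cols(\mathbf{T})$ by constantRank of $M$, this forces $\rank(\hat{\mathbf{W}})=\rows(\mathbf{W})$ for every $\hat{\mathbf{W}}$, so $\mathbf{W}$ is constantRank. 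Analogous arguments (fixing the other two blocks to nice completions and varying the third) show that $\mathbf{S}$ and $\mathbf{T}$ are constantRank as well.

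Now apply Theorem~\ref{HZCharacterization-1} to each block: case (i) puts $\mathbf{W}$ into the wide upper triangular form (or its degenerate variant), case (ii) puts $\mathbf{S}$ into the square upper triangular form (or void), and case (iii) puts $\mathbf{T}$ into the tall upper triangular form (or its degenerate variant). The field hypothesis $|\mathbb{F}|\geq\max\{m,n+1\}$ transfers to each block since each has no more rows or columns than $M$. Assembling the three block-wise equivalences via a block diagonal nonsingular matrix $R'$ and a block diagonal permutation $Q'$ (each acting only within its own row and column group, hence preserving the $3\times 3$ zero pattern) produces the form~(\ref{RefinamentTheorem71}). The uniqueness of the three blocks up to equivalence is immediate from the uniqueness part of Theorem~\ref{existance&uniqueness}, since any matrix of the form~(\ref{RefinamentTheorem71}) with $\mathbf{S}$ as large as possible is in particular a WST-decomposition of $M$.
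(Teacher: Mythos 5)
Your proposal is correct and follows essentially the same route as the paper: take a WST-decomposition with $\mathbf{S}$ maximal, show that each of $\mathbf{W}$, $\mathbf{S}$, $\mathbf{T}$ inherits the constantRank property from $M$ (the paper does this via rank subadditivity bounds rather than your explicit row reduction to block diagonal form, but the content is the same), and then apply Theorem~\ref{HZCharacterization-1} to each block and reassemble with block diagonal $R$ and $Q$.
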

\begin{proof}
 The sufficiency is obvious. So we proceed with the necessity. Let $R'$ be a nonsingular constant matrix and $Q'$ a permutation matrix such that
\begin{align*}
R'MQ'=
\begin{bmatrix}
{\bf W}& * & * \\
0 & {\bf S}& * \\
0 & 0 & {\bf T}
\end{bmatrix}
\end{align*}
is a WST-decomposition of $M$
where: ${\bf W}$ is wide FRmR or void, ${\bf S}$ is square FmR or void, ${\bf T}$ is tall FCmR or void, and ${\bf S}$ is as large as possible. Therefore   
$$\maxRank({\bf W})=\rows({\bf W}),\quad  \maxRank({\bf S})=\rows({\bf S})\quad \text{and} \quad  \maxRank({\bf T})=\cols({\bf T}).$$ 
Moreover,  as $M$ is constantRank then $R'MQ'$  is also constantRank and then  
\begin{align} \label{minRank=maxRankEc1}
\minRank\begin{smat}{\bf W}& * & *\\ 0 & {\bf S}& * \\ 0 & 0 & {\bf T}\end{smat}=\maxRank\begin{smat}{\bf W}& * & *\\ 0 & {\bf S}& * \\ 0 & 0 & {\bf T}\end{smat} =\rows({\bf W})+\rows({\bf S})+\cols({\bf T}).
\end{align}
We will prove that $M$ is equivalent to an ACI-matrix as in (\ref{RefinamentTheorem71}) in three steps. We will assume that ${\bf W}, {\bf S}$ and ${\bf T}$ are not void nor degenerate, otherwise the proof simplifies. 

\begin{enumerate}[(i)]
\item First we will prove that ${\bf W}$, ${\bf S}$ and ${\bf T}$ are full constantRank, that is:
\begin{align} 
\minRank({\bf W}) &=\maxRank({\bf W}), \label{minRank=maxRankW}\\
\minRank({\bf S}) &=\maxRank({\bf S}) \text{  and }\label{minRank=maxRankEc-1} \\ 
\minRank({\bf T}) &=\maxRank({\bf T}). \label{minRank=maxRankEc0}
\end{align}

Suppose that $\minRank({\bf T})<\maxRank({\bf T})$. Let $\begin{smat}\widehat{{\bf W}} & * & * \\ 0 & \widehat{{\bf S}} & * \\ 0 & 0 & \widehat{{\bf T}}\end{smat}$ be a completion of $\begin{smat}{\bf W}& * & *\\ 0 & {\bf S}& * \\ 0 & 0 & {\bf T}\end{smat}$ such that 
$\rank(\widehat{{\bf T}})<\maxRank({\bf T})=\cols({\bf T})$.
Then
\begin{align}\label{minRank=maxRankEc2}\nonumber
\minRank\begin{smat}{\bf W}& * & *\\ 0 & {\bf S}& * \\ 0 & 0 & {\bf T}\end{smat}
&\leq \rank\begin{smat}\widehat{{\bf W}} & * & * \\ 0 & \widehat{{\bf S}} & * \\ 0 & 0 & \widehat{{\bf T}}\end{smat}\leq\rows(\widehat{{\bf W}})+\rows(\widehat{{\bf S}})+\rank(\widehat{{\bf T}})\\ &<\rows({\bf W})+\rows({\bf S})+\cols({\bf T}).
\end{align}
As (\ref{minRank=maxRankEc1}) and (\ref{minRank=maxRankEc2}) are contradictory then (\ref{minRank=maxRankEc0}) is true. Similar arguments   prove~(\ref{minRank=maxRankW}) and~(\ref{minRank=maxRankEc-1}).

\item Now, we apply Theorem~\ref{HZCharacterization-1} $(i)$ to ${\bf W}$ to obtain a nonsingular  $R_1$ and a permutation  $Q_1$ such that $R_1 {\bf W}Q_1= 
\begin{smat}
\vspace{-3mm}1 &            &  * &|& \\
\vspace{-1mm}   &\ddots  &     &| &*\\
0 &  & 1 &|& \\ 
\end{smat}$.
We apply Theorem~\ref{HZCharacterization-1} $(ii)$ to ${\bf S}$ to obtain a nonsingular  $R_2$ and a permutation  $Q_2$ such that $R_2 {\bf S}Q_2= \begin{smat}
\vspace{-2mm}1 &  &  *\\
 &\ddots  & \\
0 &  & 1 \\ 
\end{smat}$.
And  we apply Theorem~\ref{HZCharacterization-1} $(iii)$ to ${\bf T}$ to obtain a nonsingular  $R_3$ and a permutation  $Q_3$ such that $R_3 {\bf T}Q_3= \begin{smat}
\vspace{-1mm} & * & \\ \cline{1-3}
\vspace{-2mm} 1 &  &  *\\
 &\ddots  & \\
0 &  & 1 \\ 
\end{smat}$.

\item Finally, if $R:=\begin{smat} R_1 & 0 &  0\\ 0 &R_2  & 0 \\ 0 & 0 & R_3 \end{smat}R'$ and $Q=Q' \begin{smat} Q_1 & 0 &  0\\ 0 &Q_2  & 0 \\ 0 & 0 & Q_3 \end{smat}$ then we obtain the desired result since
\begin{align*}
\small RMQ &= \begin{bmatrix} R_1 & 0 &  0\\ 0 &R_2  & 0 \\ 0 & 0 & R_3 \end{bmatrix}R' M Q' \begin{bmatrix} Q_1 & 0 &  0\\ 0 &Q_2  & 0 \\ 0 & 0 & Q_3 \end{bmatrix}= \begin{bmatrix} R_1 & 0 &  0\\ 0 &R_2  & 0 \\ 0 & 0 & R_3 \end{bmatrix}\begin{bmatrix}
{\bf W}& * & * \\
0 & {\bf S}& * \\
0 & 0 & {\bf T}
\end{bmatrix} \begin{bmatrix} Q_1 & 0 &  0\\ 0 &Q_2  & 0 \\ 0 & 0 & Q_3 \end{bmatrix}\\ 
& = \begin{bmatrix}
R_1 {\bf W}Q_1 & * & * \\
0 & R_2 {\bf S}Q_2 & * \\
0 & 0 & R_3 {\bf T}Q_3 
\end{bmatrix}
\end{align*} 
\end{enumerate}
is an ACI-matrix of type (\ref{RefinamentTheorem71}) where $R_2 {\bf S}Q_2$ is as large as possible.

\end{proof}

The characterization of constantRank ACI-matrices of Theorem~\ref{HZCharacterization-1} has a caveat: there is a restriction on the number of elements of the field that can not be avoided. In~\cite[Theorem 2.5.]{BoCa2} we extended the characterization without any restriction on the field. It is possible to apply the WST-decomposition to refine this extension analogously.

\bigskip

\bigskip


\bibliographystyle{plain}


\end{document}